\def\be#1\ee{\begin{equation}#1\end{equation}}
\newtheorem{thm}{Theorem}
\newtheorem{lem}[thm]{Lemma}
\newtheorem{prop}[thm]{Proposition}
\newtheorem{cor}[thm]{Corollary}
\newtheorem{example}[thm]{Example}
\newtheorem{rem}[thm]{Remark}
\newtheorem{defn}[thm]{Defintion}
\DeclareMathOperator{\Var}{Var}
\def\P{{\mathbb{P}}}
\def\R{\mathbb{R}}
\def\E{\mathbb{E}\,}
\def\N{{\mathbb N}}
\def\dd{\mbox{d}}
\newenvironment{proof}[1][] {\noindent {\bf Proof#1:} }{\hspace*{\fill}$\square$\medskip\par}
\newcommand{\ind}{1\hspace{-0.098cm}\mathrm{l}}
\def\DD{{\mathcal D}}
\def\G{{\Gamma}}
\def\tg{{\widetilde{\G}}}
\newcommand{\eps}{\varepsilon}
\def\kk{\kappa_H}
\def\NN{{\mathcal N}}
\def\tl{{\widetilde{\ell}}}
\def\W{{\mathcal W}}
\def\II{{\mathcal{I}}}
\def\LL{{\mathcal{L}}}
\def\MM{{\mathcal{M}}}
\def\functionc{{\mathfrak{C}}}
\def \=L{\ {\buildrel\hbox{\scriptsize d }\over =}\ }
\def\tpsi{{\widetilde{\Psi}}}
\def\tnu{{\widetilde{\nu}}}
\def\tp{{\widetilde{p}}}
\def\FGN{{\textsc{fgn}}}
\title{Small deviations of sums of correlated stationary Gaussian sequences}
\author{Frank Aurzada and Mikhail Lifshits}
\begin{document}

\maketitle


\begin{abstract}
    We consider the small deviation probabilities (SDP) for sums of stationary
    Gaussian sequences. For the cases of constant boundaries and boundaries
    tending to zero, we obtain quite general results. For the case
    of the boundaries tending to infinity, we focus our attention on
    the discrete analogs of the fractional Brownian motion (FBM). It turns out that
    the lower bounds for the SDP can be transferred from the well studied FBM case
    to the discrete time setting under the usual assumptions that imply weak convergence
    while the transfer of the corresponding upper bounds necessarily requires a
    deeper knowledge of the spectral structure of the underlying stationary sequence.
\end{abstract}

{\bf Keywords:}\ Fractional Brownian motion, fractional Gaussian noise,
Gaussian process, small deviation probability, stationary Gaussian sequence, time series.

\section{Introduction and main results} \label{sec:sec1}
\subsection{Introduction}
The small deviation problem for a stochastic process consists in studying the
probability that the process only has fluctuations below its natural scale.
Small deviation probabilities play a fundamental role in many problems in
probability and analysis, which is why there has been a lot of interest in
small deviation problems in recent years, cf.\ the survey \cite{lishao} and
the literature compilation \cite{lifshitsliterature}. There are many
connections to other questions such as the law of the iterated logarithm of
Chung type, strong limit laws in statistics, metric entropy properties of
linear operators, quantization, and several other approximation quantities
for stochastic processes.

Our work heavily relies on the recently proved Gaussian correlation inequality \cite{royen,latalamatlak}; and we believe that this new tool can lead to the solution of other, formerly inaccessible problems in the area of small deviation probabilites.

In this paper, we study small deviations of sums of correlated stationary
centered Gaussian sequences that are related to Fractional Brownian motion
(FBM). Let us first recall FBM and its small deviation asymptotics.

FBM $(W^H_t)_{t\in \R}$ is a centered Gaussian process with covariance
$$
   \E W^H_t W^H_s = \frac{1}{2} \big(|t|^{2H} + |s|^{2H} - |t-s|^{2H} \big),
$$
where $0<H<1$ is a constant parameter, called Hurst parameter. For $H=1/2$ this
is a usual Brownian motion. For any $0<H<1$, the process has stationary
increments, but no independent increments (unless $H=1/2$). Furthermore, it
is an $H$-self-similar process.  Finally, we recall the small deviation
asymptotics for fractional Brownian motion $W^H$ \cite{lilinde1998}
$$
    \ln \P\{ \sup_{0\le  t\le  1} |W^H(t)|
    \le  \eps \} \sim -\kk \eps^{-1/H},\qquad \text{as $\eps\to 0$,}
$$
where the constant $\kk\in(0,\infty)$ is not known explictly unless $H=1/2$
(and $\kappa_{1/2}=\pi^2/8$). Using the scaling property of FBM, this can
be re-written as
\begin{equation} \label{eqn:fbmconvergence}
     \ln \P\{ \sup_{0\le  t\le  N} |W^H(t)|\le  f_N \}
     \sim -\kk N f_N^{-1/H},\ \text{as $N\to\infty$, $N^{-H} f_N\to 0$.}
\end{equation}

In this paper, we consider the discrete-time analog of fractional Brownian
motion. Let $(\xi_j)_{j\in\N}$ be a real valued stationary centered Gaussian
sequence such that
\begin{equation} \label{eqn:covariances}
   \sum_{j=1}^n \sum_{k=1}^n \E \xi_j \xi_k \sim n^{2H} \ell(1/n),
\end{equation}
with $0<H<1$ and $\ell$ slowly varying at zero. It is well-known (\cite{taqqu})
that (\ref{eqn:covariances}) implies
\begin{equation} \label{eqn:weakconvergence}
     \left(\frac{1}{n^H \ell(1/n)^{1/2}}\sum_{j=1}^{[nt]}
     \xi_j\right)_{t\geq 0} \Rightarrow (W^H_t)_{t\geq 0}
\end{equation}
with fractional Brownian motion $(W^H_t)$. We remark that the same holds
if the $(\xi_j)$ are not necessarily Gaussian, but certain moment
restrictions hold, \cite{whitt}.

The question to be studied is the ``small deviation'' rate of
$S_n:=\sum_{j=1}^n \xi_j$, i.e.
\begin{equation} \label{eqn:sdofsums}
     \P\{ \max_{n=1,\ldots,N} |S_n| \le  f_N \},
     \qquad \text{as $N\to\infty$},
\end{equation}
where  $f_N \ll N^H \ell(1/N)^{1/2}$. As can be seen from the convergence result,
$(S_n)_{1\leq n\leq N}$ has fluctuations of the scale
$N^H \ell(1/N)^{1/2}$, so that indeed we deal with a small deviation
question.

There are three regimes: if $f_N\to\infty$ the small deviation properties
of $(S_n)$ are indeed governed by the same quantities as for FBM (at least
under some regularity assumptions, which are shown to be necessary).
On the other hand, for $f_N\to 0$, we deal with ``very small'' deviations,
and the rate is completely independent of any relation to FBM; we shall prove rather general results here -- in particular, unrelated to (\ref{eqn:covariances}).
In the intermediate case when $f_N$ is constant (or bounded away from zero and
infinity), the behavior is similar to the ``very small'' deviation regime,
and the rate of decay of the small deviation probability is precisely exponential.

Let us mention some related work. The classical case of {\it independent}
$(\xi_j)$ was studied by Chung \cite{chung}, Mogul'ski{\u\i} \cite{mogulskii},
and Pakshirajan \cite{pakshirajan}. In particular, Mogul'skii showed
that if the $(\xi_j)$  are i.i.d.\ centered variables with unit variance,
$f_N \to \infty$ but $N^{-1/2} f_N \to 0$, then, in agreement with
\eqref{eqn:fbmconvergence} for $H=1/2$,
\[
   \ln \P\{\max_{1\le n\le N}|S_n| \le f_N\}
   \sim   -\frac{\pi^2}{8}\ N\, f_N^{-2}.
\]
\medskip

This paper is structured as follows. Sections~\ref{sec:regimeinfty}, \ref{sec:subseconverysmall}, and \ref{sec:regimeconstant}
contain the main results for the three mentioned regimes, respectively. The proofs are given in the subsequent sections.

\subsection{Small deviations related to FBM} \label{sec:regimeinfty}
We first deal with the regime
$$
    f_N\to \infty \quad\text{and}\quad f_N \ll N^H \ell(1/N)^{1/2}
$$
in (\ref{eqn:sdofsums}). Our first main result (Theorem~\ref{thm:mainthm1})
states that a lower bound holds as one would expect from (\ref{eqn:fbmconvergence}).
In order to formulate it, let us recall the definition of the adjoint of a slowly
varying function (see \cite[Section 1.6]{Sen}): for a function $\tl$ that is
slowly varying at infinity, an adjoint function (unique up to asymptotic
equivalence) is a slowly varying function $L(\cdot)$ satisfying the relation
\be \label{adj1}
   L(r)\ \tl(r L(r)) \to 1, \qquad \textrm{as } r\to \infty.
\ee

Now we are ready to state our first main result.

\begin{thm} Let $(\xi_j)_{j\in\N}$ be a real valued stationary centered Gaussian sequence
such that $\eqref{eqn:weakconvergence}$ holds. If $f_N \to\infty$ and $f_N\ll N^H \ell(1/N)^{1/2}$ then
\begin{equation} \label{eqn:lowerbound}
  \liminf_{N\to\infty}  \frac{\ln \P\{\max_{1\le n\le N}|S_n| \le f_N\} }
  { N\, [f_N L(f_N)]^{-1/H}}
  \ge  - \kk,
\end{equation}
where $L(\cdot)$ is a slowly varying function adjoint to the function
$\tl(r):=\sqrt{\ell(r^{-1/H})}$ and $\kappa_H$ is the constant from
$\eqref{eqn:fbmconvergence}$.
\label{thm:mainthm1}
\end{thm}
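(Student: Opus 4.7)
My plan is to transfer the FBM small deviation \eqref{eqn:fbmconvergence} to $(S_n)$ using the weak convergence \eqref{eqn:weakconvergence} as the coupling mechanism and the Gaussian correlation inequality \cite{royen,latalamatlak} as the principal tool. The strategy is to partition $\{1,\ldots,N\}$ into $K_N$ blocks of a carefully chosen size $m_N$ so that each block is accessible to an FBM small deviation estimate, and then chain the blocks together.

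Concretely, for small fixed parameters $\eps,\delta>0$ I would choose $m_N\in\N$ so that $\sigma_{m_N}:=m_N^H\ell(1/m_N)^{1/2}\sim\eps^{-1}f_N$. The defining relation \eqref{adj1} of the adjoint $L$ pins this down as $m_N\sim(\eps^{-1}f_N L(f_N))^{1/H}$, and $K_N:=\lfloor N/m_N\rfloor\sim N\eps^{1/H}[f_N L(f_N)]^{-1/H}$ — already matching the target order. Then I would write
\[
\{\max_{1\le n\le N}|S_n|\le f_N\}\supseteq D\cap\bigcap_{k=0}^{K_N-1}E_k,
\]
with the skeleton event $D:=\{|S_{km_N}|\le\delta f_N:1\le k\le K_N\}$ and the bridge events $E_k:=\{\max_{km_N<n\le(k+1)m_N}|S_n-S_{km_N}|\le(1-\delta)f_N\}$, all symmetric convex in the jointly Gaussian family $(\xi_j)$. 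The Gaussian correlation inequality then gives
\[
\P\{\max_{n}|S_n|\le f_N\}\ge\P(D)\prod_{k=0}^{K_N-1}\P(E_k),
\]
and stationarity of $(\xi_j)$ reduces every $\P(E_k)$ to $\P\{\max_{n\le m_N}|S_n|\le(1-\delta)f_N\}$.

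Rescaling the bridge event by $\sigma_{m_N}$ and invoking weak convergence with the Portmanteau theorem (using that $\{\sup|f|\le c\}$ is a $W^H$-continuity set) gives $\P(E_0)\to p((1-\delta)\eps):=\P\{\sup_{[0,1]}|W^H|\le(1-\delta)\eps\}$ as $N\to\infty$. Applying \eqref{eqn:fbmconvergence}, $\ln p((1-\delta)\eps)\sim-\kappa_H((1-\delta)\eps)^{-1/H}$ as $\eps\to 0$, so
\[
K_N\ln\P(E_0)\sim-\kappa_H(1-\delta)^{-1/H}N[f_N L(f_N)]^{-1/H}
\]
in the iterated limit $N\to\infty$, $\eps\to 0$; sending $\delta\to 0$ in turn yields the target constant $-\kappa_H$ for the bridge contribution.

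The main obstacle will be controlling the skeleton factor $\ln\P(D)$. A first pass — using weak convergence for the sub-sampled Gaussian vector $(S_{km_N}/\sigma_N)_k$ and FBM small deviation at the scale $\delta f_N/\sigma_N\to 0$ — yields $\ln\P(D)\ge-\kappa_H\delta^{-1/H}(\sigma_N/f_N)^{1/H}(1+o(1))$, and a short calculation using \eqref{adj1} shows that the ratio $(\sigma_N/f_N)^{1/H}/(N[f_N L(f_N)]^{-1/H})=(\ell(1/N)^{1/2}L(f_N))^{1/H}$ is generically a bounded positive constant rather than $o(1)$. Hence a single-scale decomposition loses a $\delta^{-1/H}$ factor and cannot recover the sharp constant $\kappa_H$. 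The natural resolution is a multi-scale refinement: iterate the block/skeleton decomposition on the already Gaussian process $(S_{km_N})_k$ at successively coarser block sizes, invoking the Gaussian correlation inequality at every scale (all the iterated events remain symmetric convex). Arranging the scales so that the positional losses form a geometrically convergent series in $\delta$, the total skeleton contribution becomes $o(N[f_N L(f_N)]^{-1/H})$, and the iterated limits $N\to\infty$, $\eps\to 0$, $\delta\to 0$ recover the bound $-\kappa_H$ claimed in \eqref{eqn:lowerbound}.
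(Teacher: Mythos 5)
Your overall architecture coincides with the paper's: you split $\{1,\ldots,N\}$ into blocks of size $m_N$ chosen so that each block is "one unit" of rescaled FBM, apply the Gaussian correlation inequality to factor the small deviation event into a skeleton piece and independent-looking bridge pieces, and handle the bridges via weak convergence and \eqref{eqn:fbmconvergence}. That part is sound and is exactly the paper's treatment of $\P_2$ (with $\eps^{-1}$ here playing the role of $M^H$ there). You also correctly diagnose the real difficulty: a one-scale treatment of the skeleton event $D=\{|S_{km_N}|\le\delta f_N\}$ does not give an $o(N[f_N L(f_N)]^{-1/H})$ contribution, so the sharp constant $\kappa_H$ cannot be recovered that way.

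The gap is that the multi-scale resolution you sketch for $\ln\P(D)$ is not carried out, and the "first pass" estimate on which it builds is itself not justified. You cannot read off a rate for $\P\{\max_k|S_{km_N}/\sigma_N|\le \delta f_N/\sigma_N\}$ from weak convergence, because the threshold $\delta f_N/\sigma_N$ tends to $0$; weak convergence only controls such probabilities at fixed levels. The iterated scheme would have to re-establish at every coarser scale both a usable weak-convergence statement for the re-subsampled sums and a small-deviation estimate at a vanishing level, and you would need to check that the per-scale losses are summable — none of which is done. What the paper does instead is invoke the extended Talagrand lower bound \eqref{eqn:talagrandb} directly on the skeleton process $(S_a)_{a\in A}$. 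One bounds the Dudley metric $\E|S_a-S_b|^2$ via the covariance asymptotics, gets covering numbers $N_c(h)\le\min\{|A|,\,CNh^{-1/H}L(h)^{-1/H}\}$, and evaluates $\widetilde\Psi(\eps f_N)$; the resulting bound $\ln\P_1\ge -C(M^{-1}\ln(CM^H/\eps)+M^{-1})\,N[f_N L(f_N)]^{-1/H}$ vanishes after letting $M\to\infty$. The Talagrand bound is precisely the packaged, entropy-based form of the multi-scale chaining you gesture at, so your instinct is right, but as written the skeleton step is missing. I would recommend replacing the sketched recursion with a direct application of the Talagrand small deviation lower bound to the sub-sampled Gaussian vector, which closes the argument cleanly.
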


The proof of this theorem is given in Section~\ref{sec:proofofthm1}.

We shall prove that the corresponding upper bound surprisingly does {\it not} hold
in this generality. In order to obtain the upper bound, one has to assume
more than only the weak convergence to FBM (see Theorem~\ref{thm:mainthm2}
below), as the following negative result shows:

\begin{thm}
 \label{thm:negative} For any $H\in (0,1)$ and any
sequence $f_N$ such that  $f_N\to\infty$ and $N f_N^{-1/H} \to \infty$,
there exists a real valued stationary centered Gaussian sequence $(\xi_j)_{j\in\N}$
such that $\eqref{eqn:covariances}$ holds with $\ell\equiv 1$ but we have
\begin{equation} \label{eqn:strongerupperbound}
   \limsup_{N\to\infty}
   \frac{\ln \P\{\max_{1\le n\le N}|S_n| \le f_N\}}{f_N^{-1/H} N}= 0.
\end{equation}
\end{thm}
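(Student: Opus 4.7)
The plan is to construct, for the given $H \in (0,1)$ and given admissible sequence $(f_N)$, a stationary centered Gaussian sequence $(\xi_j)$ whose spectral measure is a carefully tuned superposition of an FGN-type absolutely continuous part and a low-complexity atomic perturbation. The FGN part ensures the variance asymptotic $\E S_n^2 \sim n^{2H}$, while the atomic part introduces, on scales tied to a lacunary subsequence $(N_k)$, a strong degeneracy in the joint law of $(S_1,\ldots,S_{N_k})$ that makes the small-ball probability decay strictly slower than $\exp(-c N f_N^{-1/H})$ along that subsequence, for every $c>0$.

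First, I would pick $N_k \uparrow \infty$ and a slowly growing integer sequence $M_k \uparrow \infty$ with $M_k \log(N_k^H / f_{N_k}) = o(N_k f_{N_k}^{-1/H})$; this is possible because $f_N\to\infty$ together with $Nf_N^{-1/H}\to\infty$ forces $a_N:=N f_N^{-1/H}$ to dominate every $O(\log N)$ term along a suitable subsequence. Next, I would define $(\xi_j)$ via a spectral representation whose controlling measure has an FGN-type singular part $c|\lambda|^{1-2H}\,d\lambda$ near $0$ (guaranteeing $\E S_n^2\sim n^{2H}$) plus a symmetric atomic perturbation supported on frequencies $\lambda = 2\pi m/N_k$ for $m=1,\ldots,M_k$ and $k\ge 1$. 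The atomic weights would be chosen summable enough that the perturbation contributes only $O(1)$ to $\Var(S_n)$ uniformly in $n$, so that (3) with $\ell\equiv 1$ is still satisfied; yet they would carry sufficient mass to enforce, on a positive-probability ``alignment'' event, that $S_{iN_k/M_k}$ is very small for $i=1,\ldots,M_k$.

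The small-ball analysis would then decouple the event $\{\max_{n\le N_k}|S_n|\le f_{N_k}\}$ via the Gaussian correlation inequality \cite{royen,latalamatlak} into (i) the ``nailing'' event that the grid values $S_{iN_k/M_k}$ are small, whose probability is bounded below by $\exp(-C M_k \log(N_k^H/f_{N_k}))$ from a direct Gaussian density estimate in $\R^{M_k}$, and (ii) a collection of $M_k$ within-block excursion events of length $N_k/M_k$, each treatable by Theorem~\ref{thm:mainthm1}. By combining these contributions and invoking the choice of $M_k$, one would obtain
$$\ln \P\{\max_{n\le N_k}|S_n|\le f_{N_k}\}\ \ge\ -\eps_k\,N_k f_{N_k}^{-1/H},\qquad \eps_k\to 0,$$
which gives (5).

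The hard part will be showing simultaneously that the atomic perturbation can (a) be realized as a \emph{stationary} Gaussian sequence, (b) preserve the variance asymptotic, and (c) genuinely depress the small-ball cost below the FBM rate. The naive attempt of adding an independent periodic zero-sum Gaussian fails at (c), since the small-ball rate of a sum of independent Gaussian processes is at best that of either summand alone; hence the construction must induce actual \emph{correlations} between the FGN backbone and the perturbation. The delicate point is the spectral design that simultaneously balances the three requirements, and the verification that the within-block conditional small-ball constant remains strictly smaller than $\kappa_H$ after conditioning on the alignment event. Here the Gaussian correlation inequality is crucial both for the decoupling and for combining estimates across the $M_k$ blocks.
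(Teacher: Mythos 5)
Your proposal contains a gap that you yourself point to but do not resolve, and it is fatal. You observe that adding an independent atomic perturbation to an FGN backbone cannot help: by Anderson's inequality the small-ball probability of the sum is at most that of the FGN component alone. Your proposed remedy --- ``induce actual correlations between the FGN backbone and the perturbation'' --- is not available for a stationary Gaussian sequence, because the law is determined by the spectral measure and spectral mass sitting on disjoint frequency sets produces \emph{independent} summands. Concretely, if the spectral measure still carries an FGN-like absolutely continuous density over the critical central band $\{1/(Md(f_N))\le|u|\le M/d(f_N)\}$, then that part is independent of any atoms you add, and it alone forces a small-ball cost of order $N f_N^{-1/H}$. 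Your nailing-plus-excursion scheme does not escape this: the nailing event costs $\exp(-CM_k\ln(N_k^H/f_{N_k}))$, which you control, but each of the $M_k$ within-block excursions of length $N_k/M_k$ still contributes $\exp(-\kappa_H(N_k/M_k)f_{N_k}^{-1/H}(1+o(1)))$ as long as the FGN backbone is alive inside the block; multiplying over blocks gives back the full $\exp(-\kappa_H N_k f_{N_k}^{-1/H})$, so the limsup remains $-\kappa_H$, not $0$.

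The paper's construction sidesteps this by a genuinely different idea: it does not superpose atoms on top of FGN, it \emph{replaces} the FGN spectral density on the central zone by finitely many atoms, while preserving the total mass on each dyadic-like annulus. This is the notion of a ``perturbation'' of $\nu_H$ in Section~\ref{sec:subpert}: a symmetric measure $\G$ with $\G[u_n,\pi]=\nu_H[u_n,\pi]$ for some $u_n\searrow 0$, $u_n/u_{n+1}\to 1$. Lemma~\ref{l:tail} and Proposition~\ref{p:invprin} show any such replacement keeps $\E|S_n|^2\sim n^{2H}$, i.e.\ \eqref{eqn:covariances} with $\ell\equiv 1$. For the concrete construction one puts $q_j$ atoms in the central zone at scale $N_j$; then the dominant piece $S_{2,n}$ is a finite ($2q_j$-dimensional) linear combination of bounded quasi-periodic functions with Gaussian coefficients, so $\sup_{n\in\N}|S_{2,n}|$ is bounded by a constant times an $\ell^1$-norm of $2q_j$ standard Gaussians, giving a small-ball lower bound $P(M_j,q_j)$ that does not decay with $N$ at all (estimate \eqref{low2}). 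The lower and higher frequency pieces contribute only $\delta_M N f_N^{-1/H}$ with $\delta_M\to 0$, as in \eqref{low1a}, \eqref{low3b}. Since the central-zone cost is $O(1)$ rather than merely $o(N f_N^{-1/H})$, the limsup in \eqref{eqn:strongerupperbound} is $0$. Without this replacement step your construction cannot suppress the central-band FGN cost, and the proof does not go through.
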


The proof of this theorem is given in Section~\ref{sec:proofofthm2}.

In order to obtain the upper bound corresponding to (\ref{eqn:lowerbound}),
-- instead of only assuming weak convergence to FBM -- we make an assumption
about the spectral measure of the sequence $(\xi_j)$.


As above, let $(\xi_j)_{j\in\N}$ be a real valued stationary centered
Gaussian sequence, and denote by $\mu$ the spectral measure:
$$
    \E \xi_j \xi_k = \E \xi_{|j-k|} \xi_0
    = \int_{[-\pi,\pi)} e^{i |j-k| u} \mu(\dd u),\qquad j,k\in\N.
$$
The spectral measure $\mu$ has a (possibly vanishing) component that
is absolutely continuous w.r.t.\ the Lebesgue measure. Let us denote by
$p$ its density, i.e.\ $\mu(\dd u)=: p(u)\dd u + \mu_s(\dd u)$.

Recall that fractional Gaussian noise, defined by
$\xi^\FGN_j := W^H(j) - W^H(j-1)$, is a stationary centered Gaussian
sequence and it has an absolutely continuous spectral measure
with a density $p_\FGN$.  The latter has
a singularity at zero (see e.g.\ \cite{sam}):
\[
   p_\FGN(u) \sim m_H \, |u|^{1-2H}, \qquad u\to 0,
\]
where $m_H=\Gamma(2H+1)\sin(\pi H)/2\pi$.

We assume that the density $p$ of the absolutely continuous component of $\mu$
satisfies
\be \label{fass}
   p(u) \sim m_H \,  \ell(u)\, |u|^{1-2H}, \qquad u\to 0,
\ee
where $\ell(\cdot)$ is a function slowly varying at zero. This means that
the behavior of the density of the absolutely continuous part of the spectral
measure of the sequence $(\xi_j)$ is comparable to the spectral density of
fractional Gaussian noise, up to the slowly varying function $\ell$. It is
well-known (also see \eqref{eqn:unnumbered6} below) that (\ref{fass})
implies (\ref{eqn:covariances}) and thus (\ref{eqn:weakconvergence}).

%

Our second main result can now be formulated as follows.

\begin{thm}  \label{thm:mainthm2}
Let $(\xi_j)_{j\in\N}$ be a real valued stationary centered Gaussain sequence.
Assume that the density of the absolutely continuous component of the spectral
measure satisfies \eqref{fass}. If  $N\to\infty$,
$[L(f_N) f_N]^{-1/H} N \to \infty$, and $f_N\to\infty$, then
\[
   \ln \P\{\max_{1\le n\le N}|S_n| \le f_N\}
   \sim - \kk\, [L(f_N) f_N]^{-1/H} N,
\]
where again $L(\cdot)$ is a slowly varying function adjoint to the function
$\tl(r)=\sqrt{\ell(r^{-1/H})}$  and $\kappa_H$ is the constant from
$\eqref{eqn:fbmconvergence}$.
\end{thm}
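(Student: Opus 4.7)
The lower bound
\[ \liminf_{N\to\infty} \frac{\ln\P\{\max_{1\le n \le N}|S_n|\le f_N\}}{N[L(f_N) f_N]^{-1/H}} \ge -\kk \]
is immediate from Theorem~\ref{thm:mainthm1}, because \eqref{fass} implies \eqref{eqn:covariances} (as noted in the text) and hence the weak-convergence hypothesis \eqref{eqn:weakconvergence}. The substance of the proof is therefore the matching upper bound; by Theorem~\ref{thm:negative} this upper bound cannot be deduced from weak convergence alone and must genuinely exploit \eqref{fass}.

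My first move is to reduce to a low-frequency, absolutely continuous piece. Fix a small $\delta>0$ and decompose $\xi_j = \xi_j^{(\mathrm{low})} + \xi_j^{(\mathrm{rest})}$ into two independent stationary Gaussian sequences with disjoint spectral supports, where $\xi^{(\mathrm{low})}$ has spectral density $p(u)\ed{|u|\le\delta}$ and $\xi^{(\mathrm{rest})}$ carries the rest of $\mu$ (including the singular part $\mu_s$ and the tails of the absolutely continuous component). Anderson's inequality then yields
\[ \P(\max_{n \le N}|S_n|\le f_N) \le \P(\max_{n \le N}|S_n^{(\mathrm{low})}|\le f_N), \]
so we may replace $(\xi_j)$ by a sequence with absolutely continuous spectral measure supported in $[-\delta,\delta]$ and density still satisfying \eqref{fass}. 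Next I calibrate the block length $M = M_N := \lceil [L(f_N) f_N]^{1/H} \rceil$; by the adjoint relation \eqref{adj1} this gives $M^H \ell(1/M)^{1/2}\sim f_N$ and $K := \lfloor N/M \rfloor \sim N[L(f_N) f_N]^{-1/H}$. Since $\{\max_{n \le N}|S_n|\le f_N\}\subset \{\max_{k \le K}|S_{kM}|\le f_N\}$, it suffices to show $\limsup_{N\to\infty} K^{-1} \ln \P(\max_{k \le K}|S_{kM}|\le f_N) \le -\kk$.

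The core step, and the main obstacle, is a \emph{uniform} covariance comparison. Writing
\[ \cov(S_{iM},S_{jM}) = \int_{-\delta}^{\delta} \overline{D_{iM}(u)}\,D_{jM}(u)\,p(u)\,\dd u \]
with Dirichlet kernel $D_n(u)=\sum_{\ell=1}^{n}e^{i\ell u}$, substituting $v=Mu$, and invoking \eqref{fass} together with the uniform-convergence form of Karamata's theorem for $\ell$, one should obtain, uniformly in $1\le i,j\le K$,
\[ \cov(S_{iM},S_{jM}) = M^{2H}\ell(1/M)\cdot\tfrac{1}{2}\bigl(i^{2H}+j^{2H}-|i-j|^{2H}\bigr)\,(1+o(1)). \]
This \emph{uniformity}, with $K\to\infty$ simultaneously, is what weak convergence fails to provide (it only yields finite-dimensional convergence), and is precisely the place where the spectral assumption \eqref{fass} must be put to work: one needs to handle both the frequencies of order much larger than $1/M$ (where the oscillation of $D_{iM}$ produces cancellation) and the near-zero frequencies (controlled via slow variation of $\ell$ on compact $v$-sets). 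Once this uniform spectral estimate is in hand, rescaling by $f_N$ produces a Gaussian vector whose covariance matrix differs from that of $(W^H(k))_{k=1}^K$ by an $o(1)$ perturbation in operator norm; a standard Gaussian density comparison (or an additional small Anderson-type perturbation) sandwiches the small deviation probability between those of sampled FBM with thresholds $1\pm o(1)$. Finally, \eqref{eqn:fbmconvergence} applied at time horizon $K$ -- after the standard discrete-to-continuous sup reduction, which loses nothing thanks to FBM's H\"older regularity -- yields the bound $\exp(-\kk(1+o(1))K)$, completing the proof.
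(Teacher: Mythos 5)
Your reduction for the lower bound (via Theorem~\ref{thm:mainthm1}) is correct, and your initial Anderson reduction is reasonable, but the core step of your upper bound argument has a genuine gap.

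\textbf{The uniform covariance comparison is not correct in general.} Already on the diagonal you claim $\E|S_{iM}|^2 = M^{2H}\ell(1/M)\,i^{2H}(1+o(1))$ uniformly for $1\le i\le K$. But the correct asymptotic (e.g.\ from \eqref{eqn:unnumbered6}) is $\E|S_{iM}|^2\sim (iM)^{2H}\ell(1/(iM))$, so the claim requires $\ell(1/(iM))/\ell(1/M)\to 1$ uniformly over $1\le i\le K$. Uniform convergence for slowly varying functions holds on compact $\lambda$-sets, not over $\lambda\in[1/K,1]$ with $K\to\infty$; for $\ell(x)=\ln(e/x)$ one has $\ell(1/(KM))/\ell(1/M)=1+\ln K/\ln M$, which diverges in regimes allowed by the hypotheses (e.g.\ $f_N=\ln N$, so $M$ grows polylogarithmically while $K$ grows polynomially). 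The substitution $v=Mu$ shows the same thing in the frequency domain: the integral defining $\cov(S_{iM},S_{jM})$ puts mass near $|v|\sim 1/\max(i,j)$, i.e.\ at physical frequency $\sim 1/(\max(i,j)M)$, where $\ell$ need not be close to $\ell(1/M)$. Your choice of a \emph{fixed} low-frequency window $[-\delta,\delta]$ does not localize the frequency scale in $N$, which is exactly why the slow variation escapes control. Even in the case $\ell\equiv 1$, where the entrywise comparison does hold, the subsequent passage from ``the covariance matrices differ by $o(1)$ in operator norm'' to ``the logarithmic small deviation asymptotics agree'' is not a standard step: the covariance matrix of $(W^H(k))_{k\le K}$ is badly conditioned (its smallest eigenvalue tends to zero as $K\to\infty$), so an operator-norm $o(1)$ perturbation does not give a multiplicative $1\pm o(1)$ comparison in the positive-semidefinite order, which is what Anderson's inequality would require; and an entrywise $1+o(1)$ factor is of no help either, since it does not preserve positive definiteness of differences.

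\textbf{What the paper does instead.} The paper also reduces to the absolutely continuous part via Anderson, but then restricts to the frequency band $\{\tfrac{1}{Md(f_N)}\le|u|\le\tfrac{M}{d(f_N)}\}$, which \emph{shrinks to zero} with $N$ while keeping the ratio of its endpoints fixed at $M^2$. On such a bounded-ratio band, \eqref{fass} and the uniform convergence theorem for slowly varying functions give the clean pointwise comparison $p(u)\ge(1-\delta)^2 L(f_N)^{-2}p_{\FGN}(u)$ (cf.\ \eqref{eqn:behofp}). Because this is a domination of spectral \emph{densities}, it immediately implies the required psd-order comparison of covariance matrices, so Anderson's inequality applies directly to compare with the correspondingly band-limited FGN. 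The restriction to the band is then removed by the Gaussian correlation inequality, using the lower/higher frequency estimates \eqref{low1a}, \eqref{low3b} for the complementary band of FGN; the discrete FGN upper bound \eqref{eqn:fgnupperbound} finishes the proof. Working at the density level on a shrinking band is precisely what avoids both problems in your approach: the slow variation becomes effectively constant on the band, and the comparison is in the psd order from the start.
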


The proof of this theorem is given in Section~\ref{sec:proofofthm3}.

\subsection{Very small deviations} \label{sec:subseconverysmall}
As the next step, we look at the opposite regime where
$$
    f_N\to 0.
$$
Let us fix the setup here as follows. As above, we consider a real valued stationary
centered Gaussian sequence $(\xi_j)_{j\in\N}$ with spectral measure $\mu$:
$$
  \E \xi_j \xi_k = \E \xi_{|j-k|} \xi_0
  =    \int_{[-\pi,\pi)} e^{i |j-k| u} \mu(\dd u).
$$
Further, we denote by $p$ the density of the absolutely continuous
component of $\mu$. As before, we will study the sums $S_n:=\sum_{j=1}^n \xi_j$.

It is well-known (see e.g.\ \cite{brockwelldavis}) that the sequence
$(\xi_j)$ is linearly regular if and only if its spectral measure is
absolutely continuous and its density $p$ satisfies the Kolmogorov condition
\be \label{Kolmkrit}
   \int_{-\pi}^\pi \ln p(u) \dd u > - \infty.
\ee
In the following we do not need the notion of regularity directly
but condition \eqref{Kolmkrit} emerges below.


Our main theorem gives the first two terms of the small deviation rate
under assumption \eqref{Kolmkrit}, i.e.\ in the presence of the regular
component. This includes in particular fractional Gaussian noise and
related sequences but does not depend on any precise relation such as
(\ref{fass}).

\begin{thm} \label{thm:verysmalldeviations} Let $(\xi_j)_{j\in\N}$ be
a real valued stationary centered Gaussian sequence with spectral
measure $\mu$ and denote by $p$ the (possibly vanishing) density of
the absolutely continuous component of $\mu$. For $f_N\to 0$ we have:
$$
    \liminf_{N\to\infty}
    \frac{\ln \P\{ \max_{1\le  n\le  N} |S_n|\le  f_N\}}{N \ln f_N^{-1}} \geq -1.
$$
If additionally condition \eqref{Kolmkrit} holds, then
$$
    \ln \P\{ \max_{1\le n\le  N} |S_n|\le  f_N\} = N \ln f_N
    - N\big[ \ln\pi + \frac{1}{4\pi} \int_{-\pi}^\pi \ln p(u) \dd u
    \big] + o(N),
$$
\end{thm}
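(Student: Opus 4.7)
The plan is to realize $\P\{\max_{1\le n\le N}|S_n|\le f_N\}$ as the Gaussian measure of the cube $[-f_N,f_N]^N$ under the covariance $\Sigma_N$ of $(S_1,\dots,S_N)$. Since $(S_1,\dots,S_N)=M(\xi_1,\dots,\xi_N)$ with $M$ the lower-triangular matrix of ones and $\det M=1$, one has $\det\Sigma_N=\det C_N$, where $C_N=(\E\xi_j\xi_k)_{j,k\le N}$ is the Toeplitz covariance matrix of $(\xi_j)$. In particular the Gaussian density of $(S_1,\dots,S_N)$ at the origin is $\phi_N(0)=(2\pi)^{-N/2}(\det C_N)^{-1/2}$, and Szegő's first limit theorem $\frac{1}{N}\ln\det C_N\to\ln(2\pi)+\frac{1}{2\pi}\int_{-\pi}^\pi\ln p(u)\,du$ is what produces the Szegő constant in the target formula.

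For the two-term asymptotic under \eqref{Kolmkrit} I would prove matching upper and lower bounds. The upper bound is immediate from $\phi_N(x)\le\phi_N(0)$ on all of $\R^N$:
$$\ln\P\le N\ln(2f_N)-\tfrac{N}{2}\ln(2\pi)-\tfrac{1}{2}\ln\det C_N,$$
and Szegő's theorem turns this into $\ln\P\le N\ln f_N-N[\ln\pi+\frac{1}{4\pi}\int\ln p]+o(N)$. For the matching lower bound I would pass to the innovations representation of $(\xi_j)$: with $\eta_n=\xi_n-\E[\xi_n\mid\mathcal{F}_{n-1}]$ and $\sigma_n^2=\Var\eta_n$, the $\eta_n$ are independent Gaussians satisfying $\sigma_n^2\to\sigma_\infty^2=\exp(\frac{1}{2\pi}\int\ln(2\pi p))>0$ (positivity being equivalent to \eqref{Kolmkrit}) and $\prod_n\sigma_n^2=\det C_N$. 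Chain-conditioning yields
$$\P\{\max|S_n|\le f_N\}=\E\bigg[\prod_{n=1}^N\Big(\Phi\bigl(\tfrac{f_N-\hat S_n}{\sigma_n}\bigr)-\Phi\bigl(\tfrac{-f_N-\hat S_n}{\sigma_n}\bigr)\Big)\mathbf{1}_{\{|S_k|\le f_N,\,k<n\}}\bigg]$$
with $\hat S_n=\E[S_n\mid\mathcal{F}_{n-1}]$; exploiting the MA/AR representation guaranteed by \eqref{Kolmkrit} to show $|\hat S_n|=o(1)$ uniformly on the good event, each factor is asymptotic to $2f_N/(\sigma_n\sqrt{2\pi})$, and summing the logarithms via $\sum_n\ln\sigma_n^2=\ln\det C_N$ combined with Szegő recovers the matching lower bound.

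For the universal lower bound $\liminf\ge-1$ (first part, no Kolmogorov assumed), I would combine the trivial inclusion $\{\max_j|\xi_j|\le f_N/N\}\subseteq\{\max_n|S_n|\le f_N\}$ with Sidak's inequality (a consequence of the Gaussian correlation inequality of \cite{royen,latalamatlak} central to this paper), applied to the stationary $(\xi_j)$:
$$\P\{\max|S_n|\le f_N\}\ge\bigl(2\Phi(f_N/(N\sqrt{\gamma(0)}))-1\bigr)^N,$$
yielding $\ln\P\ge N\ln f_N-N\ln N+O(N)$, and hence $\liminf\ge-1$ whenever $\ln N=o(\ln f_N^{-1})$. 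The \textbf{main obstacle} is twofold: first, in part~(ii), controlling the predictor $\hat S_n$ uniformly on the good event relies on summability of the MA innovation coefficients, which under just \eqref{Kolmkrit} is only $\ell^2$ rather than $\ell^1$, so a truncation or an extra spectral argument is likely required; and second, in part~(i), extending $\liminf\ge-1$ to slowly decaying $f_N$ (e.g.\ $f_N=1/\log N$) where Sidak's $N\log N$ loss is not absorbed by $N\log f_N^{-1}$. For the latter I would supplement Sidak by the density estimate $\P\ge(2f_N)^N\phi_N(0)\exp(-\tfrac{N}{2}f_N^2\|\Sigma_N^{-1}\|)$ combined with Hadamard's bound $\det C_N\le\gamma(0)^N$, distinguishing whether $\|\Sigma_N^{-1}\|$ stays bounded (the density bound closes the argument directly) or blows up (the near-degeneracy of $C_N$ permits a reduction to an effective dimension $r<N$, where the factor $f_N^r$ gives the required rate). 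The interplay of density, Sidak, and dimension-reduction arguments is where the principal technical work lies.
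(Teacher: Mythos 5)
Your upper bound coincides with the paper's: bound the Gaussian density by its value at the origin, integrate the constant over the convex body, and apply Szeg\H{o}'s theorem, having reduced $\det\Sigma_N$ to $\det C_N$ via the unimodular cumulative-sum change of variables. The lower bound is where you and the paper diverge, and the two obstacles you yourself flag are genuine: the innovations chain-conditioning requires controlling $\hat S_n$, which needs more than the $\ell^2$ information that \eqref{Kolmkrit} provides; and the Sidak/dimension-reduction dichotomy is not airtight for slowly decaying $f_N$ --- intermediate growth of $\|\Sigma_N^{-1}\|$ is covered by neither branch, and the "effective dimension $r<N$" step is a sketch of a hard technical problem, not a proof.

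The paper closes both gaps at once with a single device you did not consider: smooth the spectral measure. Replace $\mu$ by $\widetilde\mu:=\mu+\delta\Lambda$ with $\Lambda$ Lebesgue measure and $\delta>0$ fixed. By Anderson's inequality (the process with spectral measure $\widetilde\mu$ is the original plus an independent white noise), this only \emph{decreases} the small-ball probability, so it suffices to lower-bound it for $\widetilde\mu$. The gain is that the Toeplitz covariance $\widetilde K$ of the smoothed sequence satisfies $\min_j\widetilde\lambda_j\geq 2\pi\delta$ \emph{uniformly in $N$} (Rayleigh quotient plus the spectral representation of $\langle x,\widetilde K x\rangle$), so you are always in the favorable branch of your dichotomy: the quadratic form in the density is $O(N f_N^2/\delta)=o(N)$ because $f_N\to0$ with $\delta$ fixed, and the density bound plus Szeg\H{o} on $\det\widetilde K$ (now with $\ln(p(u)+\delta)$, which is integrable for every $\delta>0$ with no assumption on $p$) gives $\ln\P\geq N\ln f_N-N\bigl[\ln\pi+\frac{1}{4\pi}\int_{-\pi}^\pi\ln(p(u)+\delta)\dd u\bigr]+o(N)$. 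This already yields the first claim $\liminf\geq-1$ for any fixed $\delta$; under \eqref{Kolmkrit} one then lets $\delta\to0$ to recover $\frac{1}{4\pi}\int\ln p$. The innovations representation, Sidak, and the effective-dimension case analysis are all unnecessary once this Anderson/smoothing step is in place.
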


The proof of this theorem is given in Section~\ref{sec:verysmall}. We remark
that if condition  \eqref{Kolmkrit} does not hold, various different asymptotics may arise.
As an illustration, we mention a few examples with $p=0$. Here, $\approx$
means that the ratio of both quantities is bounded away from zero and infinity.

\begin{example} \label{exa:particularcases}
If $\mu=\delta_0$, then
$$
    \P \lbrace \max_{1\le n \le N} |S_n| \le f_N \rbrace \approx \frac{f_N}{N}.
$$
If $\mu=\delta_{-\pi}$, then
$$
     \P \lbrace \max_{1\le n \le N} |S_n| \le f_N \rbrace \approx f_N.
$$
If $\mu=\delta_{-\pi/2} + \delta_{\pi/2}$, then
$$
\P \lbrace \max_{1\le n \le N} |S_n| \le f_N \rbrace \approx f_N^2.
$$
If $\mu=\delta_0  + \delta_{-\pi} + \delta_{\pi/2}+\delta_{-\pi/2}$,
then
$$
   \P \lbrace \max_{1\le n \le N} |S_n| \le f_N \rbrace
   \approx \frac{f_N^4}{N}.
$$
\end{example}

\subsection{Constant boundary}\label{sec:regimeconstant}

Finally, we look at an intermediate regime including the case where $f_N=f$
is constant. The setup is the same as in Section~\ref{sec:subseconverysmall}:
Consider a real valued stationary centered Gaussian sequence $(\xi_j)_{j\in\N}$
with spectral measure $\mu$, set $S_n:=\sum_{j=1}^n \xi_j$, and denote by $p$
the density of the absolutely continuous component of $\mu$.

\begin{thm} \label{thm:fconst}
Let $(f_N)$ be a positive sequence having a finite positive limit.
Then the following limit exits:
$$
    \lim_{N\to\infty} \frac{1}{N}
    \ln \P\lbrace \max_{1\le n\le N} |S_n|\le  f_N \rbrace \in (-\infty,0].
$$
In particular, for every constant $f>0$ the following limit exists:
$$
    \functionc(f):= \lim_{N\to\infty} \frac{1}{N}
    \ln \P\lbrace \max_{1\le n\le N} |S_n|\le  f\rbrace \in (-\infty,0].
$$

If additionally the Kolmogorov criterion $(\ref{Kolmkrit})$ is satisfied,
then $\functionc(f)<0$.
\end{thm}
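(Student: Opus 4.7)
The plan is to establish the three assertions of the theorem using the Gaussian correlation inequality (GCI) as the principal tool. The main technical obstacle arises in the first step: the GCI only delivers an approximate superadditivity with a shift $f\mapsto f+\epsilon$ in the level, so squeezing out this shift to obtain a genuine limit at every $f>0$ requires a delicate monotonicity-cum-continuity argument rather than a direct application of Fekete's lemma.

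\textbf{Existence of $\functionc(f)$.} On the Gaussian space of $(\xi_1,\dots,\xi_{N+M})$ I would take the three symmetric convex events $A=\{|S_n|\le f,\ 1\le n\le N\}$, $B=\{|S_{N+k}-S_N|\le f,\ 1\le k\le M\}$ (whose probability is $p_M^{(f)}:=\P\{\max_{n\le M}|S_n|\le f\}$ by stationarity of $(\xi_j)$), and $C_\epsilon=\{|S_N|\le\epsilon\}$ for $\epsilon>0$. The triangle inequality gives $A\cap B\cap C_\epsilon\subseteq\{|S_n|\le f+\epsilon,\ 1\le n\le N+M\}$, and GCI \cite{royen,latalamatlak} then yields the approximate superadditivity
\begin{equation*}
\ln p_{N+M}^{(f+\epsilon)}\;\ge\;\ln p_N^{(f)}+\ln p_M^{(f)}+\ln\P(|S_N|\le\epsilon),
\end{equation*}
where the remainder is $O(\log N)$ since $\Var(S_N)$ grows at most polynomially. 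Writing $\alpha(f):=\liminf_N N^{-1}\ln p_N^{(f)}$ and $\beta(f):=\limsup_N N^{-1}\ln p_N^{(f)}$, both non-decreasing in $f$ and hence continuous off a countable set, this approximate superadditivity combined with the monotonicity in $f$ forces $\alpha(f)=\beta(f)$ at every common continuity point, and a one-sided-limit argument extends the equality to all $f>0$.

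\textbf{Convergence for $f_N\to f\in(0,\infty)$.} Monotonicity of $p_N^{(\cdot)}$ gives $p_N^{(f-\delta)}\le p_N^{(f_N)}\le p_N^{(f+\delta)}$ for any $\delta>0$ and sufficiently large $N$, so
\begin{equation*}
\functionc(f-\delta)\;\le\;\liminf_{N\to\infty}\tfrac1N\ln p_N^{(f_N)}\;\le\;\limsup_{N\to\infty}\tfrac1N\ln p_N^{(f_N)}\;\le\;\functionc(f+\delta).
\end{equation*}
Sending $\delta\to0$ and invoking the continuity of the monotone function $\functionc$ established in the first step closes the sandwich.

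\textbf{Strict negativity under Kolmogorov.} Under \eqref{Kolmkrit}, Szegő's theorem gives a strictly positive innovation variance $\sigma^2:=\exp\bigl(\frac{1}{2\pi}\int_{-\pi}^\pi\ln(2\pi p(u))\,\dd u\bigr)>0$. Conditioning on fewer events can only enlarge the conditional variance, so with $\mathcal F_{n-1}:=\sigma(\xi_1,\dots,\xi_{n-1})$ one has $\Var(\xi_n\mid\mathcal F_{n-1})\ge\sigma^2$; since $(\xi_j)$ is jointly Gaussian, the conditional law of $S_n$ given $\mathcal F_{n-1}$ is Gaussian with variance at least $\sigma^2$, giving $\P(|S_n|\le f\mid\mathcal F_{n-1})\le 2\Phi(f/\sigma)-1<1$ uniformly in the conditioning. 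Iterating the tower property yields $p_N^{(f)}\le(2\Phi(f/\sigma)-1)^N$, and therefore $\functionc(f)\le\ln(2\Phi(f/\sigma)-1)<0$.
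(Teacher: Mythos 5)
Your third step (strict negativity under the Kolmogorov criterion) is correct and essentially the paper's argument: condition on the past, bound the conditional variance of $\xi_n$ from below by the innovation variance $\sigma^2$, observe that the conditional probability of $|S_n|\le f$ is maximized when the conditional mean vanishes, and iterate via the tower property. The paper packages the last observation as Anderson's inequality, but your one-dimensional phrasing is just as good. The sandwich argument in your second step is also the paper's, \emph{provided} $\functionc$ is known to be continuous.

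The gap is in the first step, and it is a genuine one. Your approximate superadditivity reads, for fixed $\epsilon>0$,
$$\ln p_{N+M}^{(f+\epsilon)}\ \ge\ \ln p_N^{(f+\epsilon)}+\ln p_M^{(f)}+\ln\P\{|S_N|\le\epsilon\},$$
with remainder of order $-\ln N$. If you iterate this linearly ($N=kM$), you accumulate $k$ such remainders; dividing by $kM$ leaves an error $\sim -\tfrac{1}{M}\ln(kM)\to-\infty$ as $k\to\infty$, so nothing survives. If instead you iterate by doubling so that only $O(\log N)$ applications are made, the $-\ln N$ remainders become summable after normalization, but now each application of the triangle inequality raises the level $f$ by $\epsilon$, so after $\log N$ steps you are comparing against level $f+\epsilon\log N\to\infty$, which again destroys the bound. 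Thus neither version of ``approximate superadditivity $+$ monotonicity implies convergence at continuity points'' actually closes; the phrase hides exactly the difficulty. The paper's route is different in two essential ways. First, it proves the existence of the limit for \emph{strictly increasing} boundaries satisfying a summability condition (Proposition~\ref{p:incr}), using a binary doubling scheme where the ``room'' $\delta$ in $\P\{|S_{b_1}|\le\delta\}$ is not a fixed $\epsilon$ but the boundary increment $f_b-f_{b/2}$; this avoids the level escalation entirely, and the summability hypothesis tames the $O(\log N)$ remainders. Second, to pass from increasing boundaries $f-C/N$ to the constant $f$, it invokes Borell's log-concavity of $r\mapsto\ln\P\{M_N\le r\}$, letting $C\to\infty$ in the concavity inequality; as a by-product $\functionc$ is a pointwise limit of concave functions, hence concave and therefore continuous, which is precisely what your second step needs (a monotone function need not be continuous, and right-continuity of $\alpha,\beta$ as you define them does not follow from right-continuity of $f\mapsto p_N^{(f)}$). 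So the ingredients you would need to add are the strictly-increasing-boundary workaround and the log-concavity argument; without them the existence claim in step one is unjustified.
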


We recall that if the Kolmogorov criterion fails, then the rate may well not
be exponential (see Example~\ref{exa:particularcases} above).

The proof of this result is given in Section~\ref{sec:constantboundary}.

\begin{rem} {\rm
This result also sheds a different light on Theorem~\ref{thm:negative}.
Apparently, the counterexamples there depend on two things: on the one
hand, the fact that there is no absolutely continuous component, which
would give more independence, and -- on the
other hand -- the special structure of the singular
component making that part of the process well-approximable.}
\end{rem}

\begin{rem} {\rm In the case when $(\xi_j)$ is a standard normal
i.i.d.\ sequence, $e^{\functionc(f)}$ has a spectral interpretation as
the largest eigenvalue of the self-adjoint linear operator
$R : L_2[-f,f] \mapsto L_2[-f,f]$ given by
\[
   [R g] (x) := \int_{-f}^f \phi(x-y) g(y) \dd y
\]
where $\phi$ is the standard normal density. Namely, let $u(x,n)$ be the
probability to stay in $[-f,f]$ for $n$ steps of a random random walk
with standard Gaussian steps starting at $x$. Formally, $u(x,0) =1$ for
$x\in [-f,f]$. Then
\[
   u(n+1,x)= [R u(n,\cdot)] (x)
\]
and by induction and the spectral theorem one obtains
\[
    u(n,x)= [R^n 1] (x) = \sum_k \lambda_k^n \,\langle \psi_k,1 \rangle\,
    \psi_k(x) \sim  c(x) \lambda_1^n
\]
where $(\lambda_k, \psi_k)$ are pairs of (decreasing) eigenvalues
and eigenfunctions of $R$. We refer to \cite{bormog} for several
variations of this approach that clearly does not seem to work
beyond the case of independent sequences. }
\end{rem}

\section{Proof of Theorem~\ref{thm:mainthm1}} \label{sec:proofofthm1}

\subsection{Preliminaries}

First, we shall make an extensive use of the recently proved Gaussian correlation
inequality \cite{royen,latalamatlak}. It states that for any centered Gaussian measure
$\mu$ on $\R^d$ and any closed, convex, symmetric sets $B_1,B_2$ one has
$$
   \mu(B_1\cap B_2)\geq \mu(B_1)\cdot \mu(B_2).
$$
We shall use it in the form
\begin{eqnarray}  \nonumber
    & & \P\{ \max_{k\in A_1} |X_k|\le  \eps_1;  \max_{k\in A_2} |X_k|\le  \eps_2\}
\\   \label{eqn:gaussiancorrelation}
    & \geq & \P\{ \max_{k\in A_1} |X_k|\le  \eps_1\} \cdot
    \P\{ \max_{k\in A_2} |X_k|\le  \eps_2\},
\end{eqnarray}
for centered Gaussian vectors $X=(X_k)_{1\le k\le d}$ and index sets $A_1,A_2\subseteq \{1,\ldots,d\}$, and $d\in\N$.

Second, we shall recall the extended Talagrand lower bound for
small deviation probabilities, which will be used at various occasions. For this
purpose, let $(X_t)_{t\in T}$  be a centered Gaussian process. We define the Dudley
metric by
$$
   \rho(t,s):=\E[ |X_t-X_s|^2]^{1/2}
$$
and the corresponding covering numbers of $T$ by
$$
     N_c(h):=\min\{ n~|~ \exists t_1, \ldots, t_n \in T :
     \min_{i=1,\ldots,n} \rho(t,t_i) \le  h \textrm{ for all } t\in T  \}.
$$
Then the extended Talagrand bound for small deviations that we shall use
(Theorem~2 from \cite{aurzadalifshits}, see the original Talagrand's version
p.\ 257 in \cite{ledoux} and \cite{talagrand}) says that if for some function
$\Psi$ we have $N_c(h)\le  \Psi(h)$ and
\begin{equation} \label{eqn:talagrandassumption}
    \Psi(h/2)\le  C\, \Psi(h),
\end{equation}
for some $C>1$, then we have
\begin{equation} \label{eqn:talagrandb}
    \log \P\{ \sup_{t,s\in T} |X_t - X_s| \le  c_0 h \} \geq - c \, \widetilde\Psi(h),
\end{equation}
with some numercial constant $c_0$ and constant $c>0$ depending on $C$, and
$$
    \widetilde\Psi(h):=\int_h^{\operatorname{diam} (T,\rho)} \frac{\Psi(u)}{u}\dd u.
$$

Finally, let us introduce the regularly varying function
\begin{equation} \label{eqn:defnd}
  d(r):= r^{1/H} L(r)^{1/H}.
\end{equation}
Then by using \eqref{adj1} we have as $r\to\infty$
\begin{eqnarray}
  d(r)^H \ \sqrt{\ell(d(r)^{-1})}
   &=& r \ L(r)\ \sqrt{\ell(r^{-1/H}L(r)^{-1/H})} \notag
\\
   &=& r \ L(r) \ \tl(r L(r)) \sim r.    \label{drr}
\end{eqnarray}

\subsection{Proof of the theorem}
Let $M>0$ be a large constant, and let $\eps>0$ be a small constant such that
$0<\eps<\tfrac{1}{c_0}$, where $c_0$ is the numerical constant from extended Talagrand
lower bound (\ref{eqn:talagrandb}).

Set $\Delta:=\Delta_N:= \lfloor d( M^H f_N)\rfloor$, where the function $d$
was defined in (\ref{eqn:defnd}), and $A:=\{j\Delta, 0\le j \le \tfrac{N}{\Delta}\}$.

Note that by (\ref{drr}), we have
$\Delta^H \ell(\Delta^{-1})^{1/2} \sim M^H f_N$. Let $N$ be large enough such that
$$
   \Delta^H \ell(\Delta^{-1})^{1/2} \le  (1+\eps) M^H f_N.
$$

Using the Gaussian correlation inequality (\ref{eqn:gaussiancorrelation}), we obtain
\begin{eqnarray*}
   && \P\{\max_{1\le n\le N}|S_n| \le f_N\}
\\
   &\ge&  \P\left\{ \max_{a\in A}|S_a| \le c_0 \eps f_N,
   \max_{a\in A}\ \max_{1\le n \le \Delta}|S_{a+n}-S_a| \le (1-c_0\eps) f_N \right\}
\\
    &\ge&  \P\left\{ \max_{a\in A}|S_a| \le c_0 \eps f_N \right\} \
   \prod_{a\in A}\   \P\left\{\max_{1\le n\le \Delta}|S_{a+n}-S_a|
   \le (1-c_0\eps) f_N \right\}
\\
    &\ge&  \P\left\{ \max_{a\in A}|S_a| \le c_0 \eps f_N \right\} \
   \P\left\{\max_{1\le n\le \Delta}|S_n| \le (1-c_0\eps) f_N \right\}^{N/\Delta+1}
\\
    &=:& \P_1 \ \P_2^{N/\Delta+1}.
\end{eqnarray*}
For $\P_2$, we use weak convergence, with $M$ and $\eps$ fixed and $N$ going to infinity
and obtain:
\begin{eqnarray*}
  \P_2 &=&  \P\left\{\max_{1\le n\le \Delta} \frac{|S_n|}{\Delta^H \ell(\Delta^{-1})^{1/2}}
  \le \frac{(1-c_0\eps) f_N}{\Delta^H \ell(\Delta^{-1})^{1/2}} \right\}
  \\
   &\geq &  \P\left\{\max_{1\le n\le \Delta} \frac{|S_n|}{\Delta^H \ell(\Delta^{-1})^{1/2}}
   \le \frac{(1-c_0\eps)f_N}{(1+\eps)  M^H f_N} \right\}
  \\
   &\geq &  \P\left\{\max_{1\le n\le \Delta} \frac{|S_n|}{\Delta^H \ell(\Delta^{-1})^{1/2}}
   \le \frac{1-c_0\eps}{(1+\eps)  M^H} \right\}
 \\
    &\to&  \P\left\{\max_{0\le t\le 1} |W^H(t)| \le \frac{1-c_0\eps}{(1+\eps) M^H} \right\}.
\end{eqnarray*}
For every fixed $\eps_1>0$ for $M$ large enough by using small deviation asymptotics of FBM
(\ref{eqn:fbmconvergence}) we have
\[
  \P\left\{\max_{0\le t\le 1} |W^H(t)| \le \frac{1-c_0\eps}{(1+\eps)M^H} \right\}
  \ge \exp\left\{ - \kk (1+\eps_1) \frac {M(1+\eps)^{1/H}}{(1-c_0\eps)^{1/H}}  \right\}.
\]

Note that our theorem's assumption 
$\tfrac{f_N}{N^H \ell(1/N)^{1/2}}\to 0$  is equivalent to $N/\Delta\to \infty$. 
Indeed, as we see from \eqref{drr}, the function $d(\cdot)$ is an asymptotic inverse 
(see \cite[Section 1.6]{Sen}) to the function $g:d\mapsto d^H\sqrt{\ell(1/d)}$. Therefore,
\[
   \frac{M f_N}{N^H \ell(1/N)^{1/2}}= \frac{M f_N}{g(N)}\to 0  
   \quad \textrm{is equivalent to}\quad
   \frac{\Delta}{N} \sim  \frac{d(Mf_N)}{d(g(N))} \to 0.  
\]
 

So, since $N/\Delta\sim N M^{-1} [f_N L(f_N)]^{-1/H}$ , it follows that for large $N$
\be \label{P2}
    \P_2^{N/\Delta+1} \ge \exp\left\{ - \kk (1+2\eps_1)(1+\eps)^{1/H}
    \frac {N [f_N L(f_N)]^{-1/H}}{(1-c_0\eps)^{1/H}}   \right\}.
\ee

We continue with the evaluation of $\P_1$ by using the extended Talagrand inequality
(\ref{eqn:talagrandb}) as a tool. Let $N_c(\cdot)$ denote the covering numbers for the
process $\{S_a, a\in A\}$. Weak convergence yields (for large $N$, by using $f_N\to \infty$)
\[
   \E |S_a-S_b|^2 \le C^2 |a-b|^{2H} \ell((a-b)^{-1}), \qquad a,b\in A.
\]
In the following we denote by $C$ large constants, not depending on $N$, that may be
different from line to line. It follows that
\begin{eqnarray*}
   N_c(h) &\le& \Psi(h) := \min \{ |A|, C N h^{-1/H} L(h)^{-1/H}\}
\\
  &\le  & C N\ \min \{ M^{-1} [f_N L(f_N)]^{-1/H},  h^{-1/H} L(h)^{-1/H} \}
\\
   &=&  \begin{cases}
    C N M^{-1} [f_N L(f_N)]^{-1/H},       & h<h_*,\\
    C N  h^{-1/H} L(h)^{-1/H}, &h\ge h_*,
   \end{cases}
\end{eqnarray*}
with $h_* \sim CM^H f_N$. Notice that the main assumption (\ref{eqn:talagrandassumption})
of the extended Talagrand lower bound is verified because
\[
     \frac {\Psi(h/2)}{\Psi(h)} \le  \frac {(h/2)^{-1/H} L(h/2)^{-1/H} }
     {h^{-1/H} L(h)^{-1/H}} \le  C 2^{1/H},\qquad \forall h>0.
\]
Letting
\[
  \tpsi(r):=\int_r^\infty \frac{\Psi(h)}{h} \ \dd h
\]
we have by (\ref{eqn:talagrandb})
$$
  \P_1 = \P\left\{ \max_{a\in A}|S_a| \le c_0 \eps f_N \right\} \ge
  \exp\left\{- C \tpsi(\eps f_N) \right\}.
$$ 
We finally get the key estimate for $\tpsi(\eps f_N)$. Namely,
\begin{eqnarray*}
  \tpsi(\eps f_N) &\le& \left( \int_{\eps f_N}^{h_*} +
  \int_{h_*}^\infty \right) \frac{\Psi(h)}{h} \ \dd h
\\
  &\le  &  C N M^{-1} [f_N L(f_N)]^{-1/H} \ln\left( \frac{h_*}{\eps f_N} \right)
  + C\, N\, h_*^{-1/H} L(h_*)^{-1/H}
\\
  &\le &  C N M^{-1} [f_N L(f_N)]^{-1/H} \ln\left( \frac{C M^H f_N}{\eps f_N} \right)
       + C\, N\, M^{-1} [f_N L(f_N)]^{-1/H}
\\
  &=&  C \left( M^{-1}  \ln\left( \frac{C M^H}{\eps} \right)
       +  M^{-1}\right)  N\, [f_N L(f_N)]^{-1/H}.
\end{eqnarray*}
We conclude that
\be \label{P1}
  \P_1  \ge
  \exp\left\{- C  \left( M^{-1}  \ln\left( \frac{C M^H}{\eps} \right)
   + C\, M^{-1}\right)  N\, [f_N L(f_N)]^{-1/H} \right\}.
\ee
By combining \eqref{P2} and \eqref{P1} we obtain for large $N$ and $M$,
\begin{eqnarray*}
    && \frac{\ln \P\{\max_{1\le n\le N}|S_n| \le f_N\} }{ N\, [f_N L(f_N)]^{-1/H}}
 \\
   &\ge& - \kk (1+2\eps_1)(1+\eps)^{1/H}(1-c_0\eps)^{-1/H}
   - C  \left( M^{-1}  \ln\left( \frac{C M^H}{\eps} \right) +  M^{-1}\right).
\end{eqnarray*}
By letting first $N\to\infty$ and then $M\to\infty$ with $\eps,\eps_1$ fixed, we have
\[
  \liminf_{N\to\infty}   \frac{\ln \P\{\max_{1\le n\le N}|S_n| \le f_N\} }
  { N\, [f_N L(f_N)]^{-1/H}}
  \ge  - \kk (1+2\eps_1)(1+\eps) (1-c_0\eps)^{-1/H}.
\]
Then, letting $\eps,\eps_1\searrow 0$, we obtain
\[
  \liminf_{N\to\infty}  \frac{\ln \P\{\max_{1\le n\le N}|S_n| \le f_N\} }
  { N\, [f_N L(f_N)]^{-1/H}} \ge  - \kk,
\]
as required.

\section{Proof of Theorem~\ref{thm:mainthm2}}
\label{sec:spectrallower}\label{sec:proofofthm3}
\subsection{Preliminaries}
First note that the lower bound in Theorem~\ref{thm:mainthm2} follows from
Theorem~\ref{thm:mainthm1}, since (\ref{fass}) implies (\ref{eqn:covariances})
and thus (\ref{eqn:weakconvergence}). However, we shall still give an independent
proof of the lower bound in Theorem~\ref{thm:mainthm2} for the special case
that $\mu$ is itself absolutely continuous with density $p$. This is because
parts of this proof will be used in the proof of the {\it upper} bound in
Theorem~\ref{thm:mainthm2} and also in the proof of
Theorem~\ref{thm:negative} (namely (\ref{low1a}) and (\ref{low3b}) as well as (\ref{eqn:behofp})).

First, it is useful to evaluate the variances of the partial sums
\begin{eqnarray}
   \E|S_n|^2 &=& \int_{-\pi}^\pi |e^{inu}-1|^2 \frac{p(u)\dd u}{|1-e^{iu}|^2} \notag
\\
   &=& n^{-1} \int_{-n\pi}^{n\pi} |e^{iv}-1|^2 \frac{p(v/n)\dd v}{|1-e^{iv/n}|^2} \notag
\\
   &\sim& \ell(1/n)\, m_H\, n^{-1}
         \int_{-\infty}^{\infty} |e^{iv}-1|^2 (|v|/n)^{-1-2H}  \dd v \notag
\\
   &=&  \ell(1/n)\, n^{2H}.     \label{eqn:unnumbered6}
\end{eqnarray}
In particular, if we let $n\sim d(r)$, we get $\E|S_n|^2\sim r^2$ by \eqref{drr}.

In the following, we split the spectral measure into three pieces by fixing a large
$M>0$ and restricting the spectral measure to the sets $\{|u|< \tfrac{1}{M d(f_N)}\}$,
 $\{\tfrac{1}{M d(f_N)} \le |u|\le \tfrac{M}{d(f_N)}\}$, and
 $\{|u|> \tfrac{M}{d(f_N)}\}$, respectively. The sequence $(\xi_j)$ splits into
 the sum of three independent sequences $\xi^{(1)},\xi^{(2)},\xi^{(3)}$. The corresponding partial sums
 will be denoted $S_{z,n}$ with $z=1,2,3$. From the small deviation viewpoint, $S_{2,n}$
 is the main term, while two others are inessential remainders.

Let us finally mention that at various places below we will use 
the following form of Anderson's inequality. 
Let $(\xi_j)$ and $(\xi_j')$ be stationary real centered Gaussian 
sequences with spectral measures $\mu$ and $\mu'$, respectively, 
and $\mu=\mu'+\nu$ with another (positive) measure $\nu$. If $(S_n)$, 
$(S_n')$  are the partial sums corresponding to $(\xi_j)$, $(\xi_j')$,
then for any $f>0$ it is true that
\be  \label{eqn:anderson}
    \P\{\max_{1\le n\le N}|S_{n}| \le f \} 
    \le 
    \P\{\max_{1\le n\le N}|S_{n}'| \le f \}.
\ee
Indeed, one can construct a probability space with an independent  
stationary real centered Gaussian sequences $(\xi_j'')$ 
and  $(\eta_j)$ such that $(\xi_j'')$  is equidistributed with
$(\xi_j')$ and $(\eta_j)$ has the spectral measure $\nu$. Then the sum  
$(\xi_j''+\eta_j)$ has the spectral measure $\mu'+\nu=\mu$, i.e. it is
equidistributed with $(\xi_j)$. 
If $(S_n'')$ and $(T_n)$ are the partial sums corresponding to $(\xi_j'')$  
and $(\eta_j)$, respectively, we obtain from Anderson's inequality 
\cite[p. 135]{Lif95} that
\begin{eqnarray*}
 \P\{\max_{1\le n\le N}|S_{n}| \le f \} 
 &=&  \P\{\max_{1\le n\le N}|S_{n}'' + T_n| \le f \} 
 \\ 
 &=&  \E_{\eta} [ \P_{\xi''} \{\max_{1\le n\le N}|S_{n}'' + T_n| \le f \}] 
 \\ 
 &\le& \P\{\max_{1\le n\le N}|S_{n}''| \le f \} 
 = \P\{\max_{1\le n\le N}|S_{n}'| \le f \},
\end{eqnarray*}
and \eqref{eqn:anderson} is justified.

 \subsection{Proof of the lower bound}
 We wish to prove that for any fixed $\delta\in(0,\tfrac12)$
 \begin{eqnarray} \label{low1a}
  \ln \P\{\max_{1\le n\le N}|S_{1,n}| \le \delta f_N\}
   &\ge& -  \delta_M [L(f_N) f_N]^{-1/H} N,
 \\ \label{low2a}
  \!\!\!\!\! \ln \P\{\max_{1\le n\le N}|S_{2,n}| \le (1-2\delta) f_N\}
   &\ge& - \kk\, [(1-3\delta) L(f_N) f_N]^{-1/H} N,
 \\ \label{low3b}
  \ln \P\{\max_{1\le n\le N}|S_{3,n}| \le \delta f_N\}
   &\ge& -  \delta_M [L(f_N) f_N]^{-1/H} N,
 \end{eqnarray}
 where $\delta_M$ goes to zero when $M$ tends to infinity.
 It follows from the independence of the $S_{z,.}$, $z=1,2,3$, that
 \begin{eqnarray*}
  && \ln \P\{\max_{1\le n\le N}|S_n| \le f_N\}
\\
   &=& \ln \P\{\max_{1\le n\le N}|S_{1,n}+S_{2,n}+S_{3,n}| \le f_N\}
\\
   &\ge & \ln \P\{\max_{1\le n\le N}|S_{1,n}| \le \delta f_N,
   \max_{0\le n\le N}|S_{2,n}|
   \le (1-2\delta) f_N, \max_{1\le n\le N}|S_{3,n}| \le \delta f_N\}
\\
   &=& \ln \P\{\max_{1\le n\le N}|S_{1,n}| \le \delta f_N\}
       + \ln\P\{ \max_{1\le n\le N}|S_{2,n}| \le (1-2\delta) f_N\}
\\
   && \qquad\qquad +\ln\P\{ \max_{1\le n\le N}|S_{3,n}| \le \delta f_N\}
\\
   &\ge& -  \left[ 2\delta_M + \kk (1-3\delta)^{-1/H} \right]\
       [L(f_N) f_N]^{-1/H} N,
 \end{eqnarray*}
 which provides us with the correct lower bound in Theorem~\ref{thm:mainthm2}.

 \subsubsection{Lower frequencies}
First, we aim at showing (\ref{low1a}). We shall use the extended Talagrand bound
(\ref{eqn:talagrandb}) for the small deviations of $(S_{1,n})_{n\le  N}$.
For this purpose, consider the related Dudley metric: for $n,m\le  N$,
\begin{eqnarray*}
   \E|S_{1,n}-S_{1,m}|^2 &=& \E|S_{1,|n-m|}|^2
\\
   &=&\int_{|u|< \frac{1}{Md(f_N)}}
       \left|\frac{e^{i|n-m|u}-1}{e^{iu}-1} \right|^2 p(u)\dd u
\\
   &\le& C \int_{|u| < \frac{1}{Md(f_N)}}
       \frac{|n-m|^2 u^2}{|e^{iu}-1|^2} |u|^{1-2H} \ell(u)\dd u
\\
   &\le& C |n-m|^2 \int_{|u|\le 1/(Md(f_N))} |u|^{1-2H} \ell(u)\dd u
\\
   &\le& C |n-m|^2 (M d(f_N))^{2H-2} \ell(d(f_N)^{-1}).
\end{eqnarray*}
where $C$ is a constant that does not depend on $N$ (and that may change
from line to line). This shows that
\begin{eqnarray*}
   ( \E|S_{1,n}-S_{1,m}|^2 )^{1/2}&\le & C |n-m| M^{H-1} d(f_N)^{-1}
       d(f_N)^H \ell(d(f_N)^{-1})^{1/2}
\\
   &\le & C |n-m| M^{H-1} d(f_N)^{-1} f_N,
\end{eqnarray*}
where the last step is due to (\ref{drr}). From this bound for the Dudley metric
one obtains for the covering numbers related to the process $S_{1,n}$:
$$
    N_c(\eps) \le C M^{H-1} d(f_N)^{-1} f_N N \eps^{-1},\qquad \eps>0.
$$
This shows, using (\ref{eqn:talagrandb}) that
\begin{eqnarray*}
   \ln \P\{\max_{1\le n\le N}|S_{1,n}| \le \delta f_N\}
   &\geq& - C M^{H-1} d(f_N)^{-1} f_N N \cdot (\delta f_N)^{-1}
\\
   &=& - C \delta^{-1} M^{H-1} d(f_N)^{-1} N
\\
   &=& - C \delta^{-1} M^{H-1}  [L(f_N) f_N]^{-1/H} N,
\end{eqnarray*}
where $\delta_M:=C \delta^{-1} M^{H-1}$ tends to zero as $M\to\infty$,
as required by (\ref{low1a}).

 \subsubsection{Main frequencies}
Using the uniform convergence property of slowly varying functions, cf.\
\cite[Theorem 1.1]{Sen}, in the frequency zone
$\{\tfrac{1}{M d(f_N)} \le |u|\le \tfrac{M}{d(f_N)}\}$ we have
\begin{eqnarray}
  p(u) &\sim&  m_H \,  \ell(u)\, |u|^{1-2H}
  \sim m_H \, \ell(d(f_N)^{-1})\, |u|^{1-2H} \notag
  \\
  &\sim& m_H \, f_N^2\, d(f_N)^{-2H} |u|^{1-2H} \notag
  \\
  &=&  m_H \, L(f_N)^{-2} |u|^{1-2H}. \label{eqn:behofp}
 \end{eqnarray}
 The latter expression coincides with the asymptotics of $p_\FGN$ up to the
 constant factor $L(f_N)^{-2}$.  Therefore, for any $\delta_1>0$ for large $N$
 we have
 \[
   p(u)\le L(f_N)^{-2} (1+\delta_1)^2 p_\FGN(u), \qquad u\in [-\pi,\pi).
 \]
 By Anderson's inquality, cf.\ (\ref{eqn:anderson}), for any $\eps>0$ we have
 \begin{eqnarray*}
   \P\{\max_{1\le n\le N}|S_{2,n}| \le \eps \}
   &\ge&  \P\{\max_{1\le n\le N}| W^H(n)| \le L(f_N) (1+\delta_1)^{-1} \eps \}
 \\
   &\ge& \P\{\sup_{0\le t \le N}| W^H(t)| \le L(f_N) (1+\delta_1)^{-1} \eps \}.
 \end{eqnarray*}
 By letting here $\eps=(1-2\delta) f_N$ and using the small deviation asymptotics
 for $W_H$ from (\ref{eqn:fbmconvergence}), we obtain
 \begin{eqnarray*}
  &&\ln \P\{\max_{1\le n\le N}|S_{2,n}| \le (1-2\delta) f_N\}
  \\
  &\ge& - (1+o(1)) \kk \ [ L(f_N) (1+\delta_1)^{-1} (1-2\delta) f_N ]^{-1/H}  \ N.
  \end{eqnarray*}
By letting $\delta_1\to 0$ this simplifies to
\[
  \ln \P\{\max_{1\le n\le N}|S_{2,n}| \le (1-2\delta) f_N\}
  \ge -  \kk \ [ L(f_N) (1-3\delta) f_N ]^{-1/H}  \ N.
\]
 as announced in \eqref{low2a}.

 \subsubsection{Higher frequencies}
One of the main points of the evaluation here is the uniform bound
for variances. We have, by using \eqref{drr} at the end,
\begin{eqnarray}
    \E|S_{3,n}|^2 &=&  \int_{|u|>\frac{M}{d(f_N)}} |e^{inu}-1|^2
    \frac{p(u)\dd u}{|1-e^{iu}|^2}
    \notag
\\
    &\le&  4 \int_{|u|>\frac{M}{d(f_N)}} \frac{p(u)\dd u}{|1-e^{iu}|^2}
    \notag
\\
    &\sim& 4  \int_{|u|>\frac{M}{d(f_N)}}  \frac{\ell(u)\dd u}{|u|^{1+2H}}
    \notag
\\
   &\le& C \, M^{-2H} \ell\left(\frac{M}{d(f_N)}\right)\,  d(f_N)^{2H}
   \notag
\\
    &\sim& C \, M^{-2H} \, \ell \left( d(f_N)^{-1}\right) \, d(f_N)^{2H}
    \notag
\\
    &\sim&  C  \, M^{-2H} \, f_N^2.  \label{eqn:betterindstar}
\end{eqnarray}

We will show that for every $0\le j< N/d(f_N)$
\be \label{low3a}
     \ln \P\{\max_{j d(f_N) < n \le (j+1)d(f_N)}|S_{3,n}|
     \le \delta f_N\} \ge -\delta_M .
\ee
Once this is done, by the correlation inequality (\ref{eqn:gaussiancorrelation})
and the defintion of $d$, it follows that
\begin{eqnarray*}
    \ln \P\{\max_{1\le n\le N}|S_{3,n}| \le \delta f_N\}
    &\ge& \sum_{j\le \frac{N}{d(f_N)}}
    \ln \P\{\max_{j d(f_N)\le k\le (j+1)d(f_N)}|S_{3,n}| \le \delta f_N\}
 \\
    &\ge&  - \frac {N}{d(f_N)}\ \delta_M  \
    = - \delta_M \, N \ [f_N L(f_N)]^{-1/H},
\end{eqnarray*}
as required in \eqref{low3b}.
Next, for proving \eqref{low3a}, we use the correlation inequality
(\ref{eqn:gaussiancorrelation}) and separate the initial points:
\begin{eqnarray*}
  && \P\{\max_{j d(f_N)\le n\le (j+1)d(f_N)}|S_{3,n}| \le \delta f_N\}
\\
   &\ge& \P\{ |S_{3,j d(f_N)}| \le \frac{\delta f_N}{2} \} \cdot
   \P\{\max_{j d(f_N)\le n\le (j+1)d(f_N)}|S_{3,n}- S_{3,j d(f_N)}|
   \le \frac{\delta f_N}{2}\}
\\
   &=&
   \P\{ |S_{3,j d(f_N)}| \le \frac{\delta f_N}{2} \} \cdot
   \P\{ \max_{1\le n\le d(f_N)}|S_{3,n}| \le \frac{\delta f_N}{2}\}.
\end{eqnarray*}
For the first factor, we use the variance bound (\ref{eqn:betterindstar})
and obtain that (for a standard normal $\NN$)
\[
  \P\{ |S_{3,j d(f_N)}| \le \delta f_N/2 \}
  \ge \P\{ |\NN| \le  C^{-1/2} M^{H} \delta /2 \},
\]
which is close to one for large $M$.

For the second factor let
\[
  \DD:=\int_0^\infty \sqrt{\ln N_c(r)} \, \dd r
\]
be the Dudley integral of covering numbers $N_c(r)$ corresponding
to the process $(S_{3,n})_{1\le n\le d(f_N)}$. Denote
$m$ and $E$ the median and the expectation of
$ \max _{1\le n\le d(f_N)} S_{3,n}$.
It is known from the general Gaussian theory that $m\le E \le C_D \DD$
with $C_D=4\sqrt{2}$, cf. \cite[Section 14, Theorem 1]{Lif95}.
Let also
\[
  \sigma^2:= \max _{1\le n\le d(f_N)} \E|S_{3,n}|^2.
\]
We know from (\ref{eqn:betterindstar}) that
$\sigma^2\le C  \, M^{-2H} \, f_N^2$.
Then for any $r>C_D \DD$ by the Gaussian concentration inequality,
cf.\ \cite[Section 12, Theorem 2]{Lif95},
\begin{eqnarray*}
   \P\{\max _{1\le n\le d(f_N)} |S_{3,n}|\ge r \}
   &\le& 2\, \P\{\max _{1\le n\le d(f_N)} S_{3,n}\ge r \}
\\
   &=&  2\, \P\{\max _{1\le n\le d(f_N)} S_{3,n} -m \ge r-m \}
\\
   &\le&  2\, \P\{\max _{1\le n\le d(f_N)} S_{3,n} -m \ge r-E \}
\\
   &\le&  2\, \P\{\max _{1\le n\le d(f_N)} S_{3,n} -m
        \ge r-C_D\,\DD \}
\\
   &\le&  2\, \P\{\NN \ge (r-C_D\,\DD)/\sigma \}.
\end{eqnarray*}
We apply this bound with $r=\delta f_N/2$.
If we are able to prove that
\be \label{DDbound}
  \DD\le h_M f_N
\ee
with arbitrarily small $h_M$ for large $M$, then we get for large $M$
\begin{eqnarray*}
   \P\{\max _{1\le n\le d(f_N)} |S_{3,n}|\ge \delta f_N/2 \}
   &\le& 2\, \P\{\NN \ge (\delta/2-C_D\, h_M)/\sqrt{C} M^{-H} \}
\\
   &\le& 2\, \P\{\NN \ge \delta M^H / 4\sqrt{C} \}
\\
   &=& \P\{|\NN| \ge \delta M^H / 4\sqrt{C} \},
\end{eqnarray*}
or equivalently
\[
   \P\{\max _{1\le n\le d(f_N)} |S_{3,n}|\le \delta f_N/2 \}
   \ge \P\{|\NN| \le \delta M^H / 4\sqrt{C} \},
\]
which is close to $1$, as required for \eqref{low3a}.

It remains to justify \eqref{DDbound}.
First of all, notice that for all integers $n_1,n_2$ it is true that
\begin{eqnarray*}
   \E|S_{3,n_1}- S_{3,n_2}|^2 &\le& \E|S_{n_1}- S_{n_2}|^2
\\
   &=& \E|S_{|n_1-n_2|}|^2 \sim \ell\left(\frac{1}{|n_1-n_2|}\right)\
   |n_1-n_2|^{2H},
\end{eqnarray*}
having used (\ref{eqn:unnumbered6}). Hence, we have a uniform bound
\[
  \E|S_{3,n_1}- S_{3,n_2}|^2 \le A_1 \ell\left(\frac{1}{|n_1-n_2|}\right)\
  |n_1-n_2|^{2H}.
\]
We will also use the bounds
\[
  \ell\left(\frac{1}{n}\right)\ n^{2H}
  \le A_2 \ell\left(\frac{1}{d}\right)\ d^{2H} \qquad \forall n\le d,
\]
and, by using \eqref{drr}
\[
   \ell\left(\frac{1}{d(r)}\right)\ d(r)^{2H} \le A_3 r^2.
\]
It follows  now that $|n_1-n_2|\le d(r)$ yields
\[
   \E|S_{3,n_1}- S_{3,n_2}|^2 \le A_1 A_2
   \ell\left(\frac{1}{d(r)}\right)\ d(r)^{2H}
   \le A_1A_2A_3 r^2=:(A r)^2.
\]
For the covering numbers of the process $(S_{3,n})_{1\le n \le d(f_N)}$ this means
\[
   N_c(Ar) \le \frac {d(f_N)}{d(r)},\qquad r\geq r_0,
\]
where $r_0$ is the point where the function $d$ starts to be defined.
Further, trivially
\[
   N_c(Ar) \le d(f_N)\qquad r>0.
\]
We shall use the second estimate for $r\in[0,f_N/\ln f_N]$ and the first
estimate for $r>f_N/\ln f_N$. Namely,
\begin{eqnarray*}
   \DD &\le& \int_0^\sigma \sqrt{\ln N_c(\rho)} \dd \rho  
   \\
     &=& A \int_0^{\sigma/A} \sqrt{\ln N_c(Ar)} \dd r
   \\
    &\le& A \int_0^{f_N/\ln f_N} \sqrt{\ln d(f_N)} \,
    \dd r + A \int_{f_N/\ln f_N} ^{\sigma/A}
    \sqrt{\ln \frac {d(f_N)}{d(r)}} \, \dd r \qquad 
   \\
   &\le & C f_N (\ln f_N)^{-1/2} + A f_N \int_{1/\ln f_N}^{\sqrt{C}M^{-H}/A}
        \sqrt{\ln \frac {d(f_N)}{d(f_N v)}} \, \dd v,
\end{eqnarray*}
where we used that $d$ is a regularly varying function (so that $d(f_N)\le  f_N^c$
for large $N$) and that $\sigma\le  \sqrt{C} M^{-H}$ from (\ref{eqn:betterindstar}).
The first term already satisfies the claim (\ref{DDbound}), so we shall look at
the second term now. 

Since the function $d(\cdot)$ is $\tfrac{1}{H}$-regularly varying,
we have, for $N\to \infty$ (and so $f_N v\to \infty$ on the range for $v$
considered here):
\[
  \frac {d(f_N)}{d(f_N v)} = \frac {d(f_N v \cdot v^{-1})}{d(f_N v)} \to  (v^{-1})^{1/H}.
\]
and so the fraction is bounded above by $v^{-2/H}$, say, for large $N$. Hence,
for large $N$
\[
  \DD \le C f_N M^{-H} + A f_N \int_0^{\sqrt{C}M^{-H}/A} \!\!\!\!\!\!\sqrt{2|\ln v|/H}
  \,\dd v\ =: h_M f_N,
\]
as required in \eqref{DDbound}.

\subsection{Proof of the upper bound}

\subsubsection{Fractional Gaussian noise}
First note that for fractional Brownian motion, we can estimate
the continuous time maximum by the discrete time maximum as follows:
Fix $h>0$. Then using the correlation inequality (\ref{eqn:gaussiancorrelation})
\begin{eqnarray*}
   && \P\{ \max_{t\in[0,N]} |W^H_t| \le (1+h) f_N\}
\\
   &\geq& \P\{ \max_{n=1,\ldots,N} |W^H_n| \le f_N,
   \max_{n=1,\ldots,N} \max_{t\in[0,1]} |W^H_{n-1+t}-W^H_{n-1}| \le h f_N\}
\\
   &\geq& \P\{ \max_{n=1,\ldots,N} |W^H_n| \le f_N\} \cdot
   \P\{ \max_{n=1,\ldots,N} \max_{t\in[0,1]} |W^H_{n-1+t}-W^H_{n-1}|
   \le h f_N\}
\\
   &\geq& \P\{ \max_{n=1,\ldots,N} |W^H_n| \le f_N\}
   \cdot\P\{ \max_{t\in[0,1]} |W^H_{t}| \le h f_N\}^N
\\
   &=& \P\{ \max_{n=1,\ldots,N} |W^H_n| \le f_N\} \cdot
   \exp( N \ln (1- \P\{ \max_{t\in[0,1]} |W^H_{t}| > h f_N\}))
\\
   &\geq& \P\{ \max_{n=1,\ldots,N} |W^H_n| \le f_N\}
   \cdot \exp( - 2 N  \P\{ \max_{t\in[0,1]} |W^H_{t}| > h f_N\})
\\
   &\geq & \P\{ \max_{n=1,\ldots,N} |W^H_n| \le f_N\}
   \cdot \exp( - 2 N e^{-h^2 f_N^2/2} )
\\
   &\geq & \P\{ \max_{n=1,\ldots,N} |W^H_n| \le f_N\}
   \cdot \exp( - 3 N h^{-2/H} f_N^{-2/H}),
\end{eqnarray*}
for $N$ large enough. This shows, using the small deviation
asymptotics of FBM (\ref{eqn:fbmconvergence}),
\begin{eqnarray*}
   && \ln \P\{ \max_{n=1,\ldots,N} |W^H_n| \le f_N\}
\\
   &\le& -\kk (1+h)^{-1/H} f_N^{-1/H} N \cdot (1+o(1)) + 3 h^{-2/H} f_N^{-2/H} N
\\
   &\le& -\kk (1+h)^{-1/H} f_N^{-1/H} N \cdot (1+o(1)).
\end{eqnarray*}

Letting $h\to 0$, this proves that
\begin{equation} \label{eqn:fgnupperbound}
   \ln \P\{ \max_{n=1,\ldots,N} |W^H_n| \le f_N\}
   \le -\kk f_N^{-1/H} N \cdot (1+o(1)).
\end{equation}

\subsubsection{Proof of the general upper bound}
The first step is to cut off the part of the spectral measure that
belongs to the singular component. Let $(S_{n}^\mu)$
and $(S_{n}^p)$ be the partial sums of correlated stationary Gaussian
random variables with spectral measures
$\mu(\dd u)=p(u)\dd u + \mu_s(\dd u)$ and $p(u)\dd u$, respectively.
Then, by Anderson's inequality, cf.\ (\ref{eqn:anderson}),
$$
     \P\{ \max_{n=1,\ldots,N} |S_n^\mu|\le f_N\}
     \leq \P\{ \max_{n=1,\ldots,N} |S_n^p|\le f_N\}.
$$
Therefore, we can assume w.l.o.g.\ that $\mu$ is absolutely continuous
and has a spectral density $p$ satisfying (\ref{fass}).

Fix $M>0$ and $\delta>0$. We saw in (\ref{eqn:behofp}) that
on the frequency zone $\{ \frac{1}{M d(f_N)}\le |u| \le \frac{M}{d(f_N)} \}$
we have
$$
   p(u) \sim m_H L(f_N)^{-2} |u|^{1-2H},
$$
which up to the factor $L(f_N)^{-2}$ is the behavior of
fractional Gaussian noise. Therefore, for large $N$, we have
$$
    p(u) \geq (1-\delta)^2 L(f_N)^{-2} p_\FGN(u),
    \qquad \frac{1}{M d(f_N)}\le |u|\le \frac{M}{d(f_N)}.
$$

Let us denote by $S_{2,n}$ the partial sums of correlated
stationary Gaussian  random variables with spectral measure
$p(u)\ind_{\{ \frac{1}{M d(f_N)}\le |u|\le \frac{M}{d(f_N)} \}}$.
Further, let $W^{H,1}$ and $W^{H,2}$ represent the processes
related to the spectral densities
$p_\FGN(u) \ind_{\{\frac{1}{M d(f_N)}\le |u| \le \frac{M}{d(f_N)} \}^c}$
and
$p_\FGN(u) \ind_{\{\frac{1}{M d(f_N)}\le |u| \le \frac{M}{d(f_N)}\} }$,
respectively.

Then by using Anderson's inequality (cf.\ (\ref{eqn:anderson})) twice we get:
\begin{eqnarray*}
    && \P\{ \max_{n=1,\ldots,N} |S_n|\le f_N\}
\\
    &\le& \P\{ \max_{n=1,\ldots,N} |S_{2,n}|\le f_N\}
\\
    &\le& \P\{\max_{n=1,\ldots,N} |(1-\delta)
       L(f_N)^{-1} W^{H,2}_n|\le f_N\}
\\
    &=& \P\{ \max_{n=1,\ldots,N} | W^{H,2}_n|\le (1-\delta)^{-1}
       L(f_N) f_N\}
\\
    &\le& \frac{\P\{ \max_{n=1,\ldots,N} | W^{H,1}_n+W^{H,2}_n|
        \le (1+\delta)(1-\delta)^{-1} L(f_N) f_N\}}
        {\P\{ \max_{n=1,\ldots,N} | W^{H,1}_n|
        \le \delta (1-\delta)^{-1} L(f_N) f_N \}},
\end{eqnarray*}
where we used the correlation inequality (\ref{eqn:gaussiancorrelation})
in the last step.

The first term, by (\ref{eqn:fgnupperbound}) is upper bounded by
\[
    \exp(-\kk [(1+ \delta) (1-\delta)^{-1} L(f_N) f_N ]^{-1/H} N
    \cdot (1+o(1))),
\]
so that if it is true that
\begin{eqnarray} \nonumber
   && \ln \P\{ \max_{n=1,\ldots,N} | W^{H,1}_n|\le \delta (1-\delta)^{-1} L(f_N) f_N \}
\\ \label{eqn:cuttoffhighfrequencypartagain}
   &\geq& - \delta_M [ L(f_N) f_N ]^{-1/H} N (1+o(1)),
\end{eqnarray}
with $\delta_M\to 0$ as $M\to \infty$, we are done with the proof
of the upper bound for $S_n$. However, note that (\ref{low1a}) and
(\ref{low3b}) applied to $W^{H,1}$
imply (\ref{eqn:cuttoffhighfrequencypartagain}).

\section{Proof of Theorem~\ref{thm:negative}} 
\label{sec:nega}
\label{sec:proofofthm2}

\subsection{Preliminaries}

Let us recall the spectral point of view. Recall that FBM is
a process with stationary increments that can be written as
a white noise integral
\[
   W^H(t)=\int_\R (e^{itu}-1) \W(\dd u)
\]
where the control measure of the white noise $\W$ is
$\mu(du)=\tfrac{m_H\,du}{|u|^{2H+1}}$ and
$m_H=\tfrac{\Gamma(2H+1) \sin(\pi H)}{2\pi}$ with $0<H<1$.
The discrete time  fractional Gaussian noise is a stationary
sequence $\xi_j^\FGN:=W^H(j)-W^H(j-1)$. We have
\[
   \xi_j^\FGN = \int_\R e^{iju} (1-e^{-iu}) \W(\dd u), \qquad j\in \N.
\]
Hence the spectral measure of $(\xi_j^\FGN)$ on $[-\pi,\pi)$,
which we denote by $\nu_H$, is the projection of the measure
$\tfrac{m_H\, |1-e^{-iu}|^2\,du}{|u|^{2H+1}}$ by the mapping
\[
   x\mapsto 2\pi \left\{\frac{x}{2\pi}\right\}, \qquad x\in \R,
\]
where $\{\cdot\}$ denotes the fractional part of a real number.

The measure  $\nu_H$ has a density $p_\FGN$ with singularity
\[
   p_\FGN(u) \sim m_H \, |u|^{1-2H}, \qquad u\to 0.
\]

We shall construct a stationary Gaussian sequence with spectral measure
that is a ``pertubation'' (to be defined precisely in the next subsection)
of the spectral measure $\nu_H$ of fractional Gaussian noise. We shall see
that this sequence (in fact any pertubation of $\nu_H$) satisfies
(\ref{eqn:covariances}) and thus (\ref{eqn:weakconvergence}),
see Proposition~\ref{p:invprin}. On the other hand, we show that
(\ref{eqn:strongerupperbound}) can be made true, see (\ref{eqn:requpperb}).

Section~\ref{sec:nega} is structured as follows: in
Subsection~\ref{sec:subpert}, we define what we mean by a pertubation of
$\nu_H$ and show that any pertubation satisfies (\ref{eqn:covariances}).
In Subsection~\ref{sec:subconcr}, we construct a concrete pertubation of
$\nu_H$, while Subsection~\ref{sec:subfin} shows (\ref{eqn:strongerupperbound})
for the sequence arising from that concrete construction.

\subsection{Spectral measure perturbation} \label{sec:subpert}

Along with $\nu_H$ introduce a measure
$\tnu_H(du)=\tfrac{\nu_H(du)}{|e^{iu}-1|^2}$. Clearly, it has a density
$\tp_H(u)=\tfrac{p_H(u)}{|e^{iu}-1|^2}\sim  m_H \, |u|^{-1-2H}$, as
$u\to\infty$. Accordingly,
we have
\[
   \tnu_H[h,\pi] \sim \frac{m_H}{2H}\ h^{-2H},\qquad h\to 0.
\]
We introduce a class of perturbations of $\nu_H$ and show that the same
asymptotics holds for every measure of this class.
\medskip

\begin{defn}  A symmetric measure $\G$ on $[-\pi,\pi]$ is called a
{\it perturbation} of $\nu_H$, if there exists a sequence $u_n\searrow 0$
such that $u_n/u_{n+1}\to 1$ and $\G[u_n,\pi]=\nu_H[u_n,\pi]$.
\end{defn}

This simply means that $\G$ is obtained from $\nu_H$ by redistribution
of the measure within the intervals $[u_{n+1},u_n)$. We stress that $\G$
need not at all be absolutely continuous. On the contrary, a typical
perturbation we will use is a partial discretization of $\nu_H$.

As before, we denote $\tg(du)=\tfrac{\G(du)}{|e^{iu}-1|^2}$.

\begin{lem} \label{l:tail} Let $\G$ be a perturbation of $\nu_H$. Then
\[
   \tg[h,\pi] \sim \frac{m_H}{2H}\ h^{-2H},\qquad h\to 0.
\]
\end{lem}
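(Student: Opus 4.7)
The plan is to prove $\tg[h,\pi]\sim \tnu_H[h,\pi]$ and then invoke the asymptotics of $\tnu_H$ recalled just before the lemma. Throughout I use that $\G$ is a finite symmetric measure and that $|e^{iu}-1|^2=4\sin^2(u/2)$ is continuous and bounded away from zero on any set bounded away from the origin.

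First, I reduce to values of $h$ along the sequence $(u_n)$. Given $h$ small, pick $n$ with $u_{n+1}\le h<u_n$; since $\tg[\cdot,\pi]$ is non-increasing, $\tg[u_n,\pi]\le\tg[h,\pi]\le\tg[u_{n+1},\pi]$, and likewise for $\tnu_H$. Since $u_n/u_{n+1}\to 1$ and $\tnu_H[\cdot,\pi]$ is regularly varying of index $-2H$, this sandwich is asymptotically tight, and it suffices to establish $\tg[u_n,\pi]\sim\tnu_H[u_n,\pi]$ as $n\to\infty$.

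Second, I decompose along the partition $\{[u_{j+1},u_j)\}_{j\geq K}$: for any fixed $K$ and $n>K$, $\tg[u_n,\pi]=\tg[u_K,\pi]+\sum_{j=K}^{n-1}\tg[u_{j+1},u_j)$, and analogously for $\tnu_H$. The defining equality $\G[u_j,\pi]=\nu_H[u_j,\pi]$ yields, by subtraction, $\G[u_{j+1},u_j)=\nu_H[u_{j+1},u_j)$ for every $j$; so $\G$ and $\nu_H$ carry exactly the same mass on each interval of the partition. Third, on each such interval the weight $|e^{iu}-1|^{-2}=1/(4\sin^2(u/2))$ is essentially constant for large $j$: combining $4\sin^2(v/2)\sim v^2$ near $0$ with $u_j/u_{j+1}\to 1$ shows that the ratio of the extreme values of $|e^{iu}-1|^{-2}$ across $[u_{j+1},u_j)$ tends to $1$. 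Hence, for every $\eps>0$ there exists $K=K(\eps)$ such that for all $j\ge K$, $(1-\eps)\tnu_H[u_{j+1},u_j)\le \tg[u_{j+1},u_j)\le (1+\eps)\tnu_H[u_{j+1},u_j)$, where the bound is obtained by pulling a constant approximation of the weight in and out of each interval integral.

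Fourth, I sum these two-sided bounds over $K\le j<n$ and add the fixed boundary contribution $\tg[u_K,\pi]$ (finite, as $|e^{iu}-1|^{-2}$ is bounded on $[u_K,\pi]$ and $\G$ is a finite measure). This yields $\tg[u_n,\pi]=(1\pm\eps)\tnu_H[u_n,\pi]+O_K(1)$ as $n\to\infty$. Since $\tnu_H[u_n,\pi]\sim \tfrac{m_H}{2H}u_n^{-2H}\to\infty$, dividing by $\tnu_H[u_n,\pi]$ and letting first $n\to\infty$ and then $\eps\to 0$ gives $\tg[u_n,\pi]/\tnu_H[u_n,\pi]\to 1$, which closes the argument via the initial sandwich. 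No significant obstacle is anticipated; the only point requiring care is the uniform interval-wise comparison in the third step, and once this is in place the divergence of the principal term trivially absorbs the bounded remainder from $[u_K,\pi]$.
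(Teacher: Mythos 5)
Your argument is correct and follows essentially the same route as the paper: both proofs exploit that $\G$ and $\nu_H$ carry identical mass on each partition interval $[u_{n+1},u_n)$, that the weight $|e^{iu}-1|^{-2}$ is nearly constant on each such interval because $u_n/u_{n+1}\to 1$, and then sum/interpolate; you merely spell out the Stolz--Ces\`aro-type summation and the reduction to the subsequence $(u_n)$ a little more explicitly than the paper does.
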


\begin{proof}
Let
\[
   \theta_n = \frac {\max_{u\in [u_{n+1},u_n]} |e^{iu}-1|}
   {\min_{u\in [u_{n+1},u_n]} |e^{iu}-1|}.
\]
We clearly have $\theta_n\to 1$. For each $n$ we have the bound
\[
  \frac{\tg[u_{n+1},u_n)}{\tnu_H[u_{n+1},u_n)}
  \le  \frac{\G[u_{n+1},u_n)}{\nu_H[u_{n+1},u_n)}\ \theta_n^2
  = \theta_n^2,
\]
and similarly
\[
  \frac{\tg[u_{n+1},u_n)}{\tnu_H[u_{n+1},u_n)}
  \ge  \theta_n^{-2}.
\]
Therefore,
\[
  \frac{\tg[u_{n+1},u_n)}{\tnu_H[u_{n+1},u_n)}
  \to 1,
\]
and so
\[
  \tg[u_n,\pi] \sim \tnu_H[u_n,\pi] \sim \frac{m_H}{2H}\ u_n^{-2H}\ .
\]
Notice also that
\[
  \frac{\tnu_H[u_{n+1},\pi]}{\tnu_H[u_n,\pi]} \to 1.
\]
Finally, for $u\in [u_{n+1},u_n]$ it is true that
\begin{eqnarray*}
   \tg[u,\pi] &\le& \tg[u_{n+1},\pi]
\\
   &=&  \frac{\tg[u_{n+1},\pi]}{\tnu_H[u_{n+1},\pi]} \
        \frac{\tnu_H[u_{n+1},\pi]}{\tnu_H[u_{n},\pi]} \
        \frac{\tnu_H[u_{n},\pi]}{\tnu_H[u,\pi]} \ \tnu_H[u,\pi]
\\
   &\le&  (1+o(1)) \ \frac{m_H}{2H}\ u^{-2H}.
\end{eqnarray*}
The lower bound follows in the same way.
\end{proof}

\begin{cor}
Let $\G$ be a perturbation of $\nu_H$. Then
\be \label{int_u2}
    \int_{[-h,h]} u^2\, \tg(\dd u) \le C \, h^{2-2H},
    \qquad 0< h <\pi.
\ee
\end{cor}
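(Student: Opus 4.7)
The plan is to reduce the statement to a tail estimate already provided by Lemma~\ref{l:tail} via a Fubini-type integration by parts. By symmetry of $\G$ (and hence of $\tg$), it suffices to prove
\[
   \int_{(0,h]} u^2 \, \tg(du) \le C' \, h^{2-2H},
\]
since a potential atom at $u=0$ is killed by the factor $u^2$. Writing $u^2 = \int_0^u 2t\,dt$ and applying Fubini to swap the order of integration gives
\[
   \int_{(0,h]} u^2 \, \tg(du) = \int_0^h 2t\,\tg\bigl((t,h]\bigr)\,dt \le 2\int_0^h t\,\tg[t,\pi]\,dt.
\]

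At this point I would invoke Lemma~\ref{l:tail}, which states $\tg[t,\pi] \sim (m_H/2H)\,t^{-2H}$ as $t\to 0$. Since $\tg[t,\pi]$ is non-increasing in $t$ and finite on any interval $[t_0,\pi]$ with $t_0>0$ (indeed $|e^{iu}-1|^2$ is bounded below on such an interval and $\G$ is a finite measure there), the asymptotic upgrades to a uniform bound $\tg[t,\pi]\le C''\,t^{-2H}$ valid on the whole of $(0,\pi]$. Plugging this in and using $H<1$ so that $1-2H>-1$:
\[
   2\int_0^h t\cdot C''\,t^{-2H}\,dt = \frac{C''}{1-H}\, h^{2-2H},
\]
which yields \eqref{int_u2} with a constant $C$ depending only on $H$.

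There is no serious obstacle here; the argument is essentially bookkeeping around the Fubini step. The only point that requires a moment's attention is the passage from the purely asymptotic statement of Lemma~\ref{l:tail} to a bound uniform in $t\in(0,\pi]$, but this is immediate from the finiteness of $\tg$ away from the origin. Everything else is direct computation with $\int_0^h t^{1-2H}\,dt$.
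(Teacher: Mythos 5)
Your proof is correct. The paper states this corollary without proof, treating it as an immediate consequence of Lemma~\ref{l:tail}; your Fubini/layer-cake step $\int_{(0,h]} u^2\,\tg(\dd u)\le 2\int_0^h t\,\tg[t,\pi]\,\dd t$, together with the standard upgrade of the asymptotic $\tg[t,\pi]\sim \frac{m_H}{2H}t^{-2H}$ to a uniform bound $\tg[t,\pi]\le C'' t^{-2H}$ on $(0,\pi]$ (using monotonicity and finiteness of the tail away from zero), is precisely the intended derivation.
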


\begin{prop} \label{p:invprin}
Let $\G$ be a perturbation of $\nu_H$. Then for a stationary
sequence with spectral measure $\G$ it is true that
\[
  \E|S_n|^2 \sim n^{2H}, \qquad \textrm{as  } n\to \infty.
\]
\end{prop}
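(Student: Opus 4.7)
The plan is to express the variance via the spectral representation and then reduce the claim to a vague convergence / tail-control argument for a rescaled version of the measure $\tg$. Concretely, from $\E|S_n|^2 = \int_{-\pi}^{\pi} |e^{inu}-1|^2\,\tg(\dd u)$, I would substitute $v=nu$ and study the rescaled measures
\[
   \rho_n(A) := n^{-2H}\,\tg\!\left(\tfrac{A}{n}\right),\qquad A\subset[-n\pi,n\pi],
\]
so that $n^{-2H}\E|S_n|^2 = \int_{-n\pi}^{n\pi} |e^{iv}-1|^2 \rho_n(\dd v)$. The target is to show this tends to $\int_{\R}|e^{iv}-1|^2\,\frac{m_H\,\dd v}{|v|^{1+2H}} = \E|W^H(1)|^2 = 1$.

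First I would use Lemma~\ref{l:tail} to verify that $\rho_n$ converges vaguely on $\R\setminus\{0\}$ to $\rho_\infty(\dd v)=m_H|v|^{-1-2H}\dd v$. Indeed, for any fixed $h>0$,
\[
   \rho_n([h,n\pi]) = n^{-2H}\,\tg[h/n,\pi]\ \sim\ n^{-2H}\cdot\tfrac{m_H}{2H}(h/n)^{-2H} = \tfrac{m_H}{2H}\,h^{-2H} = \rho_\infty([h,\infty)),
\]
with the analogous statement on the negative half-line by symmetry. This yields weak convergence of $\rho_n$ to $\rho_\infty$ on every compact subset of $\R\setminus\{0\}$, so that for fixed $0<\delta<M$,
\[
   \int_{\delta<|v|\le M} |e^{iv}-1|^2\,\rho_n(\dd v)\ \longrightarrow\ \int_{\delta<|v|\le M} |e^{iv}-1|^2\,\rho_\infty(\dd v).
\]

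Then I would bound the two tails uniformly in $n$. Near zero, using $|e^{iv}-1|^2\le v^2$ and translating back via $v=nu$,
\[
   \int_{|v|\le\delta} |e^{iv}-1|^2\,\rho_n(\dd v)\ \le\ n^{-2H}\!\int_{|u|\le\delta/n}\!(nu)^2\,\tg(\dd u)\ =\ n^{2-2H}\!\int_{|u|\le\delta/n}\!u^2\,\tg(\dd u),
\]
which by (\ref{int_u2}) is at most $C\,\delta^{2-2H}$, and the analogous bound for $\rho_\infty$ is also $O(\delta^{2-2H})$. Near infinity, using $|e^{iv}-1|^2\le 4$ and Lemma~\ref{l:tail} applied to $\tg[M/n,\pi]$, I get the uniform bound $\int_{|v|>M}|e^{iv}-1|^2\rho_n(\dd v)\le C M^{-2H}$, with the same bound for $\rho_\infty$.

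Combining the three ranges, letting $n\to\infty$ first and then $\delta\downarrow 0$, $M\uparrow\infty$, gives $n^{-2H}\E|S_n|^2\to \int_\R|e^{iv}-1|^2\,\rho_\infty(\dd v)$. The value of this last integral is $1$, which follows from the spectral representation of fractional Brownian motion (recall $W^H(1)=\int_\R(e^{iv}-1)\W(\dd v)$ with control measure $m_H|v|^{-1-2H}\dd v$, and $\E|W^H(1)|^2=1$). The main subtlety is the tail control: Lemma~\ref{l:tail} gives the asymptotics of $\tg[h,\pi]$ only as $h\to 0$, while the range $v\in[M,n\pi]$ translates to $h=v/n\in[M/n,\pi]$, which is not uniformly small. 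This is handled by splitting the $v>M$ region into the near-tail $v\in[M,\eta n]$ (small $\eta$, where the asymptotic applies for large $n$) and the residual $v\in[\eta n,n\pi]$ (where $\tg[\eta,\pi]$ is a fixed finite quantity whose contribution, after multiplication by $n^{-2H}$, vanishes). Apart from this, the proof is a clean truncation argument.
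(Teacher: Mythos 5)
Your proof is correct, but it takes a genuinely different route from the paper's. The paper integrates by parts, rewriting $\E|S_n|^2 = 4\int_0^\pi(1-\cos(nu))\,\tg(\dd u)$ as $4\int_0^{n\pi}\sin(v)\,F(v/n)\,\dd v$ with $F(u)=\tg[u,\pi]$, then truncates the $v$-integral at an odd (resp.\ even) multiple of $\pi$, exploiting the monotonicity of $F$ together with the sign changes of $\sin$ to obtain one-sided bounds; this device is needed there because the limiting integral $\int_0^\infty\sin(v)v^{-2H}\dd v$ is only conditionally convergent for $H\le 1/2$. You instead keep the absolutely integrable kernel $|e^{iv}-1|^2$, rescale the measure to $\rho_n$, and establish vague convergence $\rho_n\to\rho_\infty$ on $\R\setminus\{0\}$ from Lemma~\ref{l:tail}, supplemented by uniform tail control near $0$ (via \eqref{int_u2}) and near $\infty$ (again via Lemma~\ref{l:tail}). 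Both proofs run on the same engine, Lemma~\ref{l:tail}, but yours avoids the integration by parts and the odd/even parity trick, and your identification of the limiting constant as $\E|W^H(1)|^2=1$ via the spectral representation of FBM is cleaner than invoking the tabulated value of $\int_0^\infty \sin(v)\,v^{-2H}\dd v$ (and sidesteps the principal-value caveat the paper must note). One small remark: the ``main subtlety'' you flag at the end is in fact already settled by the bound you wrote two sentences earlier --- the estimate $\int_{|v|>M}|e^{iv}-1|^2\rho_n(\dd v)\le 4\rho_n(\{|v|>M\})= 8\,n^{-2H}\,\tg[M/n,\pi]$ only calls upon Lemma~\ref{l:tail} at the single radius $h=M/n\to 0$ (for $M$ fixed, $n\to\infty$), not uniformly over $[M/n,\pi]$; the extra splitting into $[M,\eta n]$ and $[\eta n, n\pi]$ is therefore unnecessary.
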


\begin{proof} The spectral representation yields
\begin{eqnarray*}
   \E|S_n|^2 &=& \int_{-\pi}^\pi |e^{inu}-1|^2
   \frac{\G(\dd u)}{|e^{iu}-1|^2}
\\
   &=& \int_{-\pi}^\pi |e^{inu}-1|^2\, \tg(\dd u)
   = 4 \int_{0}^\pi (1-\cos(nu)) \, \tg(\dd u).
\end{eqnarray*}
Let denote $F(u) =\tg[u,\pi]$. Integrating by parts implies
\[
   \int_{0}^\pi (1-\cos(nu))\, \tg(\dd u)
   = n  \int_0^{\pi} \sin(nu) F(u)\, \dd u
   = \int_0^{n\pi} \sin(v) F(v/n)\, \dd v.
\]
Let us fix a large positive {\it odd} integer $V$. Since
$F(\cdot)$ is a decreasing function, for $n\ge V$ we have
\[
   \int_0^{n\pi} \sin(v) F(v/n)\, dv
   \le \int_0^{V\pi} \sin(v) F(v/n) \, \dd v.
\]
Furthermore, for any $\eps>0$ and all large $n$
Lemma \ref{l:tail} yields
\[
   \int_0^{V\pi} \sin(v) F(v/n) \, \dd v
   = \frac{m_H}{2H} \int_0^{V\pi} \sin(v) (v/n)^{-2H}
   (1+\vartheta_n(v)\eps)\, \dd v
\]
with $|\vartheta_n(v)|\le 1$. It follows that
\[
  \int_0^{V\pi} \sin(v) F(v/n)\, \dd v
  \le
  \frac{m_H}{2H} \ n^{2H}  \left[ \int_0^{V\pi} \sin(v)v^{-2H} \dd v
     +\eps \int_0^{V\pi} |\sin(v)|v^{-2H} \dd v\right].
\]
Hence,
\[
  \limsup_{n\to\infty} \frac{\E|S_n|^2}{n^{2H}}
  \le \frac{2m_H}{H} \left[ \int_0^{V\pi} \sin(v)v^{-2H} \dd v
      +\eps \int_0^{V\pi} |\sin(v)|v^{-2H} \dd v \right].
\]
By letting $\eps\to 0$, then $V\to\infty$, we obtain
\[
  \limsup_{n\to\infty} \frac{\E|S_n|^2}{n^{2H}}
  \le \frac{2m_H}{H}  \int_0^{\infty} \sin(v)v^{-2H} \dd v.
\]
The converse estimate
\[
   \liminf_{n\to\infty} \frac{\E|S_n|^2}{n^{2H}}
  \ge \frac{2 m_H}{H}  \int_0^{\infty} \sin(v)v^{-2H} \dd v
\]
is obtained by the same way using large {\it even} $V$'s.
We conclude that
\[
  \E|S_n|^2 \sim \left[ \frac{2 m_H}{H}\
  \int_0^{\infty} \sin v \ v^{-2H} \dd v\right] n^{2H}
  = n^{2H}.
\]
The constant in brackets is equal to $1$ by the definition of
$m_H$ and by the well-known formula for the integral, see
\cite[Formula 858.811]{Dw} (notice by the way that the integral
is {\it absolutely} convergent only for $H>1/2$, otherwise it is
understood as the principal value). This constant also {\it must} be
equal to $1$, since the formula holds also for the non-perturbed case
$\G=\nu_H$ where we have the exact equality $\E|S_n|^2=n^{2H}$.
\end{proof}

\subsection{Construction of the perturbed spectral measure}
\label{sec:subconcr}

We first choose two sequences of positive reals $M_j\nearrow \infty$,
and positive integers $q_j \to\infty$ such that
\be\label{Mq}
  \lim_{j\to\infty} \frac{M_j^2}{q_j} =0.
\ee
Denote $d(r)=r^{1/H}$.
In our evaluations we strongly follow the proof of the lower bound
in our spectral result in Section~\ref{sec:spectrallower} For every
$N$ we may split the spectral domain into three parts
$\{|u|\le \tfrac{1}{M d(f_N)}\}$,
$\{\tfrac{1}{M d(f_N)} \le |u|\le \tfrac{M}{d(f_N)}\}$, and
 $\{|u|\ge \tfrac{M}{d(f_N)}\}$, respectively. The sequence $\xi$
 splits into the sum of three independent sequences $\xi^{(1)},\xi^{(2)},\xi^{(3)}$.
 The corresponding partial sums will be denoted
 $S_{z,n}$ with $z=1,2,3$.

 We know that from the small deviation viewpoint,  $S_{2,n}$ is the main
 term  while two others are inessential remainders. Therefore, the
 main attention should be payed to the central part of the spectrum.

 Let us choose a subsequence $N_j$ increasing to infinity so quickly
 that the corresponding central domains do not overlap, i.e.
 $ \tfrac{1}{M_jd(N_j)} > \tfrac{M_{j+1}}{ d(N_{j+1})}$,
 which can be rewritten as
 \be \label{nonover}
    d(N_{j+1})> M_{j+1} M_j d(N_j).
 \ee
 We also need another growth condition
 \be \label{PN}
   \lim_{j\to\infty}  \frac{|\ln P(M_j,q_j)|}{N_j f_{N_j}^{-1/H}} = 0,
 \ee
 where $P(M,q)$ is a function explicitly defined below in \eqref{PMq}.
 At the moment we do not care about its form but only stress that the
 construction of such $(N_j)$ is possible due to the assumption
 $N f_{N}^{-1/H}\to \infty$.

 Once all sequences are constructed, we build the perturbation $\G$
 of the measure $\nu_H$. For each $j$ we discretize the measure
 $\nu_H$ on the zone
  $\{\tfrac{1}{M_j d(f_{N_j})} \le |u|\le \tfrac{M_j}{d(f_{N_j})}\}$,
  by spreading over it uniformly $q_j$ points
  \[
    t_{j,k}:=  \frac{1}{M_j d(f_{N_j})}  +  \frac{k}{q_j} \
    \left( M_j-\frac{1}{M_j} \right)\ \frac{1}{ d(f_{N_j})} \ ,
    \qquad  0\le k\le q_j,
  \]
  and putting the weights
   $\G\{\pm t_{j,k}\}:=\nu_H[t_{j,k},t_{j,k+1}]$ for $0\le k< q_j$.
  We let $\G=\nu_H$ outside of the zones of perturbation.
  The constructed measure $\G$ is a perturbation of $\nu_H$,
  because by \eqref{Mq}
  \[
    \max_{0\le k< q_j} \left| \frac{t_{j,k+1}}{t_{j,k}}-1 \right|
    \le \frac{M_j(M_j-1/M_j)}{q_j} \to 0, \qquad j\to\infty.
  \]
  Therefore, all evaluations from the previous section apply.
  \medskip

\subsection{Probabilistic evaluations}
\label{sec:subfin}
Now we fix $j$ for a while and eliminate it from the notation.
We will prove that, for the concrete construction in
Section~\ref{sec:subconcr}, we have
 \begin{eqnarray} \label{low1}
    \ln \P\{\max_{1\le n\le N}|S_{1,n}| \le f_N/3\}
    \ge -  \delta_M  f_N^{-1/H} N,
 \\ \label{low2}
     \P\{\max_{1\le n\le N}|S_{2,n}| \le f_N/3\} \ge  P(M,q),
 \\ \label{low3}
     \ln \P\{\max_{1\le n\le N}|S_{3,n}| \le f_N/3\}
     \ge -  \delta_M f_N^{-1/H} N,
 \end{eqnarray}
 where $\delta_M$ goes to zero when $M$ tends to infinity.

 From this, it follows via the correlation inequality
 (\ref{eqn:gaussiancorrelation}) that
 \[
    \ln \P\{\max_{1\le n\le N}|S_n| \le f_N\}
    \ge - 2\delta_M  f_N^{-1/H} N + \ln P(M,q).
 \]
 Now we let $j$ vary. By using $M_j\to\infty$ and \eqref{PN}
 we have the required
 \begin{equation} \label{eqn:requpperb}
  \lim_{j\to\infty} \frac{ \ln \P\{\max_{1\le n\le N_j}|S_n| \le f_{N_j}\}}
  {f_{N_j}^{-1/H} N_j } = 0.
 \end{equation}

 It remains to justify (\ref{low1}), (\ref{low2}), and (\ref{low3}).
 We will not repeat in detail the former evaluations leading to the
 estimates \eqref{low1} and \eqref{low3}, as the same estimates were
 shown in (\ref{low1a}) and (\ref{low3b}). Essentially, it is
 sufficient to check which properties of the spectral measure and
 partial sum variances they use.
 \medskip

 The bound \eqref{low1} for the lower frequencies, based on
 extended Talagrand estimate, uses only \eqref{int_u2} in the form
 \begin{eqnarray*}
   \E|S_{1,n}-S_{1,m}|^2 &=& \int_{|u|
   \le \frac{1}{Md(f_N)}} |e^{i(n-m)u}-1|^2 \tg(\dd u)
\\
   &\le&  (n-m)^2  \int_{|u|\le \frac{1}{Md(f_N)}} u^2\, \tg(\dd u)
\\
   &\le&  C \ (n-m)^2\  [M\, d(f_N)]^{2H-2},
 \end{eqnarray*}
 hence,
 \[
   N_c(\eps) \le C\ M^{H-1} d(f_N)^{H-1}\ \frac{N}{\eps},
 \]
 and (\ref{eqn:talagrandb}) yields
 \begin{eqnarray*}
   \P\{\max_{1\le n\le N}|S_{1,n}| \le f_N/3\}
   &\ge& \exp\left\{- C M^{H-1} d(f_N)^{H-1}\ \frac{N}{f_N} \right\}
   \\
   &=&  \exp \left\{ - C M^{H-1} \ f_N^{-1/H} N \right\},
 \end{eqnarray*}
 as required in \eqref{low1}.
 \medskip

 The bound \eqref{low3} for higher frequencies requires first
 of all the variance evaluation:
 \begin{eqnarray*}
    \E|S_{3,n}|^2 &=& \int_{\frac{M}{d(f_N)}
    \le |u| \le \pi} |e^{inu}-1|^2\,  \tg(\dd u)
 \\
    &\le& 8 \ \tg \left[ \frac{M}{d(f_N)},\pi \right]
 \\
    &\le& C \left( \frac{M}{d(f_N)} \right)^{-2H} = C\ M^{-2H} \ f_N^2,
 \end{eqnarray*}
 where we used Lemma \ref{l:tail} in the last step.

 We also need the increment evaluation
 \[
     \E|S_{3,n_1}- S_{3,n_2}|^2 \le \E|S_{n_1}- S_{n_2}|^2
     = \E|S_{|n_1-n_2|}|^2 \le C |n_1-n_2|^{2H},
 \]
 where we used Proposition \ref{p:invprin} in the last step.
 Then, the estimate \eqref{low3} follows as before, by an
 application of the Dudley integral bound.
 \medskip

 We pass now to \eqref{low2} which is the most delicate part. Let
 \[
   \G_k:= \G \{ t_{k} \} =\nu_H[t_{k}, t_{k+1}], \qquad 0\le k< q.
 \]
 We easily obtain from the definitions of $t_k$ and $\nu_H$ that
 \[
   \G_k \le C\ \left(\frac{1}{M d(f_N)}\right)^{1-2H}\
   \frac{M}{q \ d(f_N)}
   = C \ M^{2H} d(f_N)^{2H-2}\ q^{-1}.
 \]
 Furthermore,
 \begin{eqnarray*}
     \tg_k &:=& \tg\{t_k\}=\frac{\G_k}{|e^{it_k}-1|^2}
 \\
     &\le& \frac{C\ \G_k}{t_k^2} \le C \ M^{2+2H} d(f_N)^{2H} q^{-1}
     = C \ M^{2+2H} f_N^{2}q^{-1}.
 \end{eqnarray*}
 The spectral representation yields
 \[
    S_{2,n}= \sum_{k=0}^{q-1} \sqrt{\tg_k} \
    \left(\xi_k \cos(nt_k)+ \eta_k \sin(n t_k) \right)
 \]
 where $\{\xi_k,\eta_k\}_{0\le k<q}$ is a set of standard
 Gaussian i.i.d.\ random variables. It follows that
 \begin{eqnarray*}
     \sup_{n\in \N} |S_{2,n}| &\le& \max_{k} \sqrt{\tg_k} \
     \sum_{k=0}^{q-1} \left(|\xi_k|+ |\eta_k| \right)
\\
     &\le& C \ M^{1+H} f_N\ q^{-1/2}
     \sum_{k=0}^{q-1} \left(|\xi_k|+ |\eta_k| \right).
 \end{eqnarray*}
 and so
 \begin{eqnarray} \nonumber
   \P\left\{ \sup_{n\in \N} |S_{2,n}| \le f_N/3   \right\}
     &\ge&
    \P\left\{  C \ M^{1+H}  q^{-1/2}
    \sum_{k=0}^{q-1} \left(|\xi_k|+ |\eta_k| \right) \le 1/3\right\}
  \\ \label{PMq}
    &:=& P(M,q),
  \end{eqnarray}
 as required in \eqref{low2}.

\section{Proof of Theorem~\ref{thm:verysmalldeviations}}
\label{sec:verysmall}

{\it Upper bound.} Define the matrix $K\in\mathbb R^{N\times N}$ by
$K_{\ell,m}:=\E\xi_\ell \xi_m$ for $\ell,m=1,\ldots N$ and the function
$\kappa(x_1,x_2,\ldots,x_N):=(x_1,x_1+x_2,\ldots,x_1+\ldots+x_N)$.

If \eqref{Kolmkrit} holds, then $\det K >0$ and we have
$$
   \P\{ \max_{1\le  n\le  N} |S_n|\le  f_N\}
   = \int_{\{ \|\kappa(x)\|_\infty \le  f_N \}}
   \frac{1}{(2\pi)^{N/2} \sqrt{\det K}} e^{-\langle x,K^{-1} x\rangle/2}\,
   \dd x.
$$

Since $K$ is non-negative definite, it is true that
$\langle x,K^{-1} x\rangle \geq 0$ for all $x\in \R^N$, so that we get
the following upper bound
$$
    \P\{ \max_{1\le  n\le  N} |S_n| \le  f_N\}
    \le   \frac{\operatorname{vol}\{ ||\kappa(x)||_\infty \le  f_N \}}{(2\pi)^{N/2}
    \sqrt{\operatorname{det} K}} = \frac{( 2f_N)^N }{(2\pi)^{N/2}
    \sqrt{\operatorname{det} K}}\, .
$$
By the Szeg\H{o} limit theorem \cite{boettchersilbermann,gray},
$$
   \lim_{N\to\infty} \frac{1}{N}\ln \operatorname{det} K
   = \frac{1}{2\pi} \int_{-\pi}^\pi \ln [2\pi \,p(u)] \dd u,
$$
where $p$ is the density of the absolutely continuous part of the
spectral measure.  If the integral on the right hand side is finite,
we get that
$$
    \ln \P\{ \max_{1\le  n\le  N} |S_n| \le  f_N\}
    \le  N \ln f_N - N  \big[  \ln\pi
    + \frac{1}{4\pi} \int_{-\pi}^\pi \ln p(u) \dd u\big] + o(N).
$$

{\it Lower bound.} First observe that
\begin{equation} \label{eqn:xinftytozero}
    \{x : \|\kappa(x)\|_\infty \le  f_N \}
    \subseteq \{ x : \|x\|_\infty \le  2 f_N\}.
\end{equation}

Write $(\lambda_j)_{j=1,\ldots,N}=(\lambda_j^{(N)})_{j=1,\ldots,N}$ for
the eigenvalues of $K=K^{(N)}$ and note that a covariance matrix $K$ is
diagonizable (since it is symmetric), say $K=Q^T D Q$ with orthonormal
matrix $Q$ and diagonal matrix $D$. Therefore, for
$x\in \R^N$ it is true that
\begin{eqnarray*}
    \langle x,K^{-1} x\rangle &=& \langle x,Q^T D^{-1} Q x\rangle
    =\langle Q x, D^{-1} Q x\rangle = \sum_{j=1}^N \lambda_j^{-1} (Q x)_j^2.
\\
    &\le  &  \max_{j=1,\ldots,N} \lambda_j^{-1} \cdot \sum_{j=1}^N  (Q x)_j^2
\\
    &=& \frac{1}{\min_{j=1,\ldots,N} \lambda_j} \cdot  \|Qx\|_2^2.
\\
    &=& \frac{1}{\min_{j=1,\ldots,N} \lambda_j} \cdot \|x\|_2^2
\\
    &\le & \frac{1}{\min_{j=1,\ldots,N} \lambda_j} \cdot N  \|x\|_\infty^2\, .
\end{eqnarray*}
In particular, for $x$ from the sets in (\ref{eqn:xinftytozero}),
\[
     \langle x,K^{-1} x\rangle
     \le  \frac{1}{\min_{j=1,\ldots,N} \lambda_j}  \cdot N \cdot (2 f_N)^2.
\]

In order to estimate the minimum of the eigenvalues, recall from linear
algebra that
$$
   \min_j \lambda_j = \min_{x\in\R^N}
   \frac{\langle x,K x\rangle}{\langle x,x\rangle}.
$$

Now, note that by the spectral representation
\[
    {\langle x,K x\rangle} =  \E \left| \sum_{k=1}^N x_k \xi_k \right|^2
      =  \int_{[-\pi,\pi)} \Big|\sum_{k=1}^N x_k e^{iku} \Big|^2 \mu(\dd u)
\]
and by the same formula with $K$ replaced by the unit matrix,
$$
   {\langle x,x\rangle} = \frac{1}{2\pi}\, \int_{[-\pi,\pi)}
   \big|\sum_{k=1}^N x_k e^{i ku} \big|^2 \dd u.
$$

Fix $\delta>0$. Let us denote by $(\widetilde S_n)$ the sequence of partial
sums corresponding to the spectral measure $\widetilde\mu := \mu+\delta \Lambda$
where $\Lambda$ is the Lebesgue measure. Let $\widetilde{K}$ denote the
corresponding covariance matrix and $(\widetilde{\lambda}_j)_{j=1,\dots,N}$
its eigenvalues.

Using the last three observations for $\widetilde K$ and
$(\widetilde \lambda_j)$, we get that
\[
   \min_j \widetilde\lambda_j = \min_{x\in\R^N} \frac
   {\int_{[-\pi,\pi)} \big|\sum_{k=1}^N x_k e^{i\,ku} \big|^2 \widetilde\mu(\dd u)}
   {  \frac{1}{2\pi} \int_{[-\pi,\pi)} \big|\sum_{k=1}^N x_k e^{i\,ku} \big|^2 \dd u}
   \geq 2\pi\delta.
\]
Now we can proceed similarly to the upper bound:
\begin{eqnarray}
     \P\{ \max_{1\le n\le N} |\widetilde S_n|\le  f_N\}
     &=& \int_{\{\|\kappa(x)\|_\infty \le  f_N \}}
     \frac{1}{(2\pi)^{N/2} \sqrt{\det \widetilde K}} \,
     e^{-\langle x,\widetilde K^{-1} x\rangle/2}\, \dd x
\notag
\\
    &\geq& \int_{\{ \|\kappa(x)\|_\infty \le  f_N \}}
    \frac{1}{(2\pi)^{N/2} \sqrt{\det \widetilde K}}\,
    e^{-N f_N^2 / (\pi\delta)}\, \dd x
\notag
\\
 \label{eqn:lowerboundverysmallargument}
    &=& \frac{(2f_N)^N}{(2\pi)^{N/2}
    \sqrt{\det \widetilde K}}\, e^{-N f_N^2 / (\pi\delta)}.
\end{eqnarray}
By using Anderson's inequality, cf.\ (\ref{eqn:anderson}), and applying again the Szeg\H{o}
limit theorem for handling $\det \widetilde K$ we see that
\begin{eqnarray}
&&
   \ln\P\{ \max_{1\le  n\le  N} |S_n|\le  f_N\}
   \geq \ln \P\{ \max_{1\le  n\le  N} |\widetilde S_n|\le  f_N\}
   \notag
\\
   &\geq&  N \ln f_N - N  \Big[\ln\pi +
   \frac{1}{4\pi} \int_{-\pi}^\pi \ln (p(u)+\delta) \dd u
    + o(1)- \frac{f_N^2}{\pi\delta} \Big].
    \label{eqn:lowr}
\end{eqnarray}
By using $f_N\to 0$ we have $f_N^2/\delta = o(1)$ and the first claim
of the theorem,
$$
    \liminf_{N\to\infty} \frac{\ln \P\{ \max_{1\le n\le  N} |S_n|\le  f_N\}}
    {N \ln f_N^{-1}} \geq -1,
$$
follows from \eqref{eqn:lowr}.

Inequality (\ref{eqn:lowr}) also yields
$$
         \liminf_{N\to\infty}
         \frac{\ln \P\{ \max_{1\le  n\le  N} |S_n|\le  f_N\} - N \ln f_N}{N}
         \geq -\ln\pi - \frac{1}{4\pi}
         \int_{-\pi}^\pi \ln (p(u)+\delta) \dd u.
$$
In the case $\int_{-\pi}^{\pi} \ln p(u) \dd u > -\infty$,
letting here $\delta\to 0$ shows the second claim of the theorem.


\section{Constant boundary} \label{sec:constantboundary}

Let $M_N=\max_{1\le n\le N}|S_n|$. We want to prove in particular
that for any constant $f>0$ there exists the limit
\begin{equation} \label{e1}
   \lim_{N\to\infty}
   \frac{\ln \P\{M_N \le f\}}{N} \in (-\infty, 0].
\end{equation}

First we prove an intermediate result (Proposition~\ref{p:incr}) showing that
the limit exists for a certain class of strictly increasing functions.  Then
we show that the limit exists for constants
(Proposition~\ref{prop:existenceoflimitconst}) before finally proving the
full main result (Theorem~\ref{thm:fconst}).

\begin{prop} \label{p:incr}
Let $(f_N)$ be a strictly increasing, positive sequence satisfying the growth
condition
\be \label{incr}
  \sum_{m=1}^{\infty} r_m<\infty
\ee
where
\[
  r_m := 2^{-m}   \max_{2^m\le b \le 2^{m+1}} |\ln (f_b-f_{7b/8})|.
\]

Then there exists the limit
\begin{equation} \label{e2}
   \lim_{N\to\infty}
   \frac{\ln \P\{M_{N} \le f_N \}}{N} \in (-\infty, 0].
\end{equation}
\end{prop}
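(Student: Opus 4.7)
Set $a_N := -\ln \P\{M_N \le f_N\}$; the task reduces to showing $a_N/N$ converges in $[0,\infty)$. My plan is to use the Gaussian correlation inequality $\eqref{eqn:gaussiancorrelation}$ together with the $7/8$-splitting dictated by the growth hypothesis in order to establish an approximately subadditive inequality for $(a_N)$, and then deduce convergence from the summability $\sum_m r_m<\infty$ in the spirit of a Hammersley-type generalized Fekete theorem. Finiteness of the limit (i.e.\ that it is not $-\infty$) will follow from a cheap linear-in-$N$ upper bound on $a_N$ obtained by decomposing a trajectory into blocks of bounded size and invoking $\eqref{eqn:gaussiancorrelation}$.

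The basic estimate comes from splitting a path of length $b$ at time $\lfloor 7b/8\rfloor$: using $|S_n|\le |S_{\lfloor 7b/8\rfloor}|+|S_n-S_{\lfloor 7b/8\rfloor}|$ for $n>\lfloor 7b/8\rfloor$, stationarity of $(\xi_j)$, and $\eqref{eqn:gaussiancorrelation}$, one obtains
\[
    a_b \le a_{\lfloor 7b/8\rfloor}+c_b, \qquad c_b := -\ln \P\{M_{\lceil b/8\rceil}\le f_b-f_{\lfloor 7b/8\rfloor}\}.
\]
This decomposition is the natural one because the quantity $f_b-f_{7b/8}$ governing $c_b$ is precisely the one appearing in the definition of $r_m$. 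A second application of $\eqref{eqn:gaussiancorrelation}$ to the one-coordinate events $\{|\xi_j|\le s\}$ gives $\P\{M_K\le Ks\}\ge \P\{|\xi_1|\le s\}^K$, which combined with the standard Gaussian estimate $-\ln\P\{|\xi_1|\le s\}\le C(|\ln s|_{+}+1)$ controls $c_b$ in terms of $|\ln(f_b-f_{7b/8})|$; by hypothesis, for $b\in[2^m,2^{m+1}]$ this is at most $2^m r_m$, and $\sum_m r_m<\infty$.

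I would then iterate $a_b\le a_{7b/8}+c_b$ along the geometric sequence $b_k:=(7/8)^k b$ for $k\le K\sim\log_{8/7} b$, producing the telescoping upper bound
\[
    \frac{a_b}{b} \le (7/8)^{K}\,\frac{a_{b_K}}{b} + \sum_{k=0}^{K-1}(7/8)^{k}\,\frac{c_{b_k}}{b_k},
\]
in which the summability $\sum_m r_m<\infty$ ensures that the tail of the error series is small. A matching lower bound must then be established by a symmetric argument -- for instance by using the Anderson inequality $\eqref{eqn:anderson}$ to compare $a_N$ with the corresponding quantity for a slightly perturbed auxiliary process, or by exploiting the log-concavity of the Gaussian measure on the symmetric convex set $\{x:\max_n|S_n(x)|\le t\}$ to control the ratios $\P\{M_N\le \alpha\}/\P\{M_N\le \beta\}$ appearing when one tries to replace $f_N$ by a nearby threshold. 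Together these should yield Cauchyness of $(a_{2^m}/2^m)_m$, and the extension from dyadic to general $N$ is a routine interpolation using the basic inequality once more. \emph{The main obstacle} is precisely this closing of the loop: $\eqref{eqn:gaussiancorrelation}$ produces only \emph{lower} bounds on probabilities, hence only \emph{upper} bounds on $a_N$, so deducing convergence rather than mere boundedness requires invoking the full strength of Anderson-type monotonicity and Gaussian log-concavity; the hypothesis that $(f_N)$ is \emph{strictly} increasing enters here to guarantee that all gaps $f_b-f_{7b/8}$ are strictly positive, making these arguments applicable.
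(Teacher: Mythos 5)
Your basic estimate $a_b \le a_{\lfloor 7b/8\rfloor} + c_b$ with $c_b := -\ln\P\{M_{\lceil b/8\rceil}\le f_b - f_{7b/8}\}$ has a cost $c_b$ that is \emph{linear} in $b$: your own bound $\P\{M_K\le Ks\}\ge\P\{|\xi_1|\le s\}^K$ applied with $K\sim b/8$, $s\sim 8(f_b-f_{7b/8})/b$ gives $c_b\le C\,b\,\bigl(\ln b + |\ln(f_b-f_{7b/8})| + 1\bigr)$, so $c_{b_k}/b_k$ is of order $\ln b_k + |\ln(f_{b_k}-f_{7b_k/8})|$, which is \emph{not} $o(1)$. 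When you substitute this into $\sum_{k<K}(7/8)^k c_{b_k}/b_k$, the $\ln b_k$ contribution alone sums to $\asymp 8\ln b$ (since $\ln b_k = \ln b - k\ln(8/7)$), and the $|\ln(f_{b_k}-f_{7b_k/8})|$ contribution, grouped into dyadic blocks $m_k$ with $(7/8)^k\asymp 2^{m_k - M}$, becomes $\asymp\sum_m 2^{2m-M}r_m \asymp 2^M r_M$, which is not controlled by $\sum_m r_m<\infty$. Thus your iterated bound gives $a_b/b\le O(\ln b)$ — formally correct, but it does not even establish boundedness. The underlying defect is that the $7/8$ split forces an entire block of length $\sim b/8$ to live in the shrinking window $[-(f_b-f_{7b/8}),f_b-f_{7b/8}]$, which costs $\exp(-c\, b)$ at best.

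The paper's argument avoids precisely this by paying only a \emph{single-coordinate} price for each splice: the three-factor correlation inequality $\eqref{ci}$, namely $\P\{M_{N_1+N_2}\le f+\delta\}\ge\P\{M_{N_1}\le f+\delta\}\,\P\{|S_{N_1}|\le\delta\}\,\P\{M_{N_2}\le f\}$, has a middle factor whose log is only $O(|\ln\delta|+\ln N_1 + 1)$, sublinear in the path length. Applied to a near-equal split $b=b_1+b_2$ with $b_1,b_2\in\Delta_m$, $b\in\Delta_{m+1}$, this yields genuine approximate subadditivity for $L_N:=\tfrac1N\ln\P\{M_N\le f_N\}$ with a remainder whose normalized block sum $r'_m = r_m + m2^{-m}$ is summable under $\eqref{incr}$. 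Finally, convergence (rather than boundedness alone) is obtained by a two-sided dyadic argument on the block extrema $\II_m=\min_{\Delta_m}L_N$, $\MM_m=\max_{\Delta_m}L_N$: first $\II_{m+1}\ge\II_m - cr'_m$ forces $\lim_m\II_m$ to exist, and then comparing the block maximum $a\in\Delta_{m-1}$, the block minimum $b\in\Delta_{m+1}$, and $b-a\in\Delta_m\cup\Delta_{m+1}$ gives $\limsup_m\MM_m\le\lim_m\II_m$. Your plan to close via Anderson's inequality or Gaussian log-concavity is a mismatch for this proposition; those are the tools the paper deploys in the \emph{next} step, Proposition~\ref{prop:existenceoflimitconst}, to pass from strictly increasing boundaries to a constant boundary.
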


\begin{rem} \label{rem14} {\rm
Condition \eqref{incr} holds, for example, for the boundaries of the type
$f_N=f-c\, N^{-q}$, $N>N_0$ for any parameters $f,c,q>0$.
}
\end{rem}

\begin{proof}
Let us set
\[
   L_N:=  \frac{\ln \P\{M_{N} \le f_N \}}{N}, \qquad N\in \N.
\]

Let $\Delta_m:=\{N\in\N: 2^m\le N\le 2^{m+1}\}$ be the binary blocks
and denote
\[
 \II_m:=\min_{N\in \Delta_m} L_N, \qquad
 \MM_m:=\max_{N\in \Delta_m} L_N.
\]
It is sufficient for us to prove that there exist equal limits
\be \label{limits}
  \lim_{m\to\infty} \MM_m = \lim_{m\to\infty} \II_m \in(-\infty,0].
\ee
We first prove that the second limit exists.

For any $N_1,N_2\in \N$, $f,\delta>0$, the correlation inequality yields
\be \label{ci}
  \P\{M_{N_1+N_2} \le f+\delta\} \ge \P\{M_{N_1} \le f+\delta\}\
  \P\{|S_{N_1}| \le \delta\}\ \P\{M_{N_2} \le f\}.
\ee
This will be our main tool along the proof.

For any $m\in \N$ let us take $b\in \Delta_{m+1}$ such that
$L_b=\II_{m+1}$ and represent it in the form $b=b_1+b_2$ with
$b_1=b_2=b/2$ for even $b$ and $b_1=(b+1)/2, b_2=(b-1)/2$ for odd $b$.
In any case we have $b_1,b_2\in \Delta_m$. By applying \eqref{ci}
with $N_1=b_1, N_2=b_2$
and $f=f_{b_2}$, $\delta=f_b-f_{b_2}$ we obtain
\begin{eqnarray*}
\P\{M_{b} \le f_b\} &\ge& \P\{M_{b_1} \le f_{b}\}\
  \P\{|S_{b_1}| \le f_b-f_{b_2}\}\ \P\{M_{b_2} \le f_{b_2}\}
\\
   &\ge& \P\{M_{b_1} \le f_{b_1}\}\
  \P\{c b_1|\NN| \le f_b-f_{7b/8}\}\ \P\{M_{b_2} \le f_{b_2}\}
\\
   &\ge& \P\{M_{b_1} \le f_{b_1}\}\
  c \min\{\frac{f_b-f_{7b/8}}{b_1}, 1\}  \ \P\{M_{b_2} \le f_{b_2}\},
\end{eqnarray*}
where $\NN$ is a standard normal random variable.
By taking logarithms, this leads to
\begin{eqnarray*}
  b L_b &\ge& b_1 L_{b_1} + b_2 L_{b_2} - c
              - \left|\ln \frac{f_b-f_{7b/8}}{b_1}\right|
\\
   &\ge& b_1 L_{b_1} + b_2 L_{b_2} - c - \left|\ln(f_b-f_{7b/8})\right| -\ln b_1
\\
\end{eqnarray*}
and we obtain
\begin{eqnarray} \nonumber
\II_{m+1} &=& L_b
\ge \frac{b_1}{b} L_{b_1} + \frac{b_2}{b} L_{b_2}
   - \frac{1}{b}\, \left[ c + \left|\ln(f_b-f_{7b/8})\right| +\ln b_1\right]
\\ \label{e4}
  &\ge& \II_m  - c \, r'_m
\end{eqnarray}
where $r'_m=r_m+ m 2^{-m}$.
It follows from \eqref{e4} that for any $m_0\in \N$
\[
   \liminf_{m\to\infty} \II_m \ge \II_{m_0} - c\, \sum_{m=m_0}^\infty r'_m
\]
where the series is convergent by \eqref{incr}. We observe from this inequality
that $\liminf_{m\to\infty} \II_m$ is not equal to $-\infty$. Furthermore, taking
limsup in the right hand side we obtain
\[
  \liminf_{m\to\infty} \II_m \ge \limsup_{m_0\to\infty} \II_{m_0}\ .
\]
Therefore, the existence of $\lim_{m\to\infty} \II_m$ and its finiteness are
now proved. It remains to prove that
\be \label{limits2}
  \limsup_{m\to\infty} \MM_m \le \lim_{m\to\infty} \II_m.
\ee

For any $m\in \N$ choose $a\in \Delta_{m-1}$, $b\in \Delta_{m+1}$
such that $L_a=\MM_{m-1}$ and $L_b=\II_{m+1}$.
Notice that
\[
   2^{m+2}\ge b\ge b-a \ge 2^{m+1}-2^{m}=2^m,
\]
hence, $b-a \in \Delta_{m}\cup \Delta_{m+1}$. We also have
\[
  \frac{b-a}{b}= 1- \frac{a}{b}\le \frac {7}{8}.
\]

By applying \eqref{ci} with $N_1=a$, $N_2=b-a$, $f=f_{b-a}$, $\delta=f_b-f_{b-a}$
we obtain
\begin{eqnarray*}
  \P\{M_{b} \le f_b\} &\ge& \P\{M_{a} \le f_b\}\
  \P\{|S_{a}| \le f_b-f_{b-a}\}\ \P\{M_{b-a} \le f_{b-a}\}
\\
&\ge& \P\{M_{a} \le f_a\}\
  \P\{|S_{a}| \le f_b-f_{b-a}\}\ \P\{M_{b-a} \le f_{b-a}\}.
\end{eqnarray*}
By taking logarithms we get
\begin{eqnarray*}
   b \II_{m+1} &=& b L_b \ge a L_a + (b-a)L_{b-a} +\ln \P(c\,a|\NN|\le f_b-f_{b-a})
\\
   &\ge& a \MM_{m-1} + (b-a)\min\{\II_m,\II_{m+1}\} +\ln \P(c\,a|\NN|\le f_b-f_{7b/8})
\\
   &\ge& a \MM_{m-1} + (b-a)\min\{\II_m,\II_{m+1}\} -c- \left|\ln [\frac{f_b-f_{7b/8}}{a}]\right|.
\end{eqnarray*}
We may rewrite this inequality as
\begin{eqnarray*}
 a \MM_{m-1} &\le&  b \II_{m+1} - (b-a)\min\{\II_m,\II_{m+1}\}
 + c + \left|\ln [\frac{f_b-f_{7b/8}}{a}]\right|
 \\
  &=&  a \II_{m+1} + (b-a)[ \II_{m+1}
   - \min\{\II_m,\II_{m+1}\}] + c + \left|\ln [\frac{f_b-f_{7b/8}}{a}]\right|
\\
  &\le&  a \II_{m+1} + (b-a)|\II_{m+1} - \II_m| + c + \left|\ln [\frac{f_b-f_{7b/8}}{a}]\right|.
\end{eqnarray*}
Dividing by $a$ and using $(b-a)/a\leq 8$ yields
\begin{eqnarray*}
   \MM_{m-1} &\le&  \II_{m+1} + \frac{b-a}{a}\ |\II_{m+1} - \II_m|
   + \frac{1}{a} \left( c + \left|\ln(f_b-f_{7b/8})\right|+\ln a\right)
  \\
   &\le&   \II_{m+1} + 8\ |\II_{m+1} - \II_m| + c \, r'_{m+1}.
\end{eqnarray*}
Since $\lim_{m\to\infty} (\II_{m+1} - \II_m)=0$ and
$\lim_{m\to\infty} r'_m=0$,
taking the limit yields \eqref{limits2}.
\end{proof}

Now we may prove the main result of this section.

\begin{prop} \label{prop:existenceoflimitconst}
For every constant $f>0$ the following limit exists:
\begin{equation} \label{flim}
   \lim_{N\to\infty}
   \frac{\ln \P\{M_{N} \le f \}}{N} \in (-\infty, 0].
\end{equation}
\end{prop}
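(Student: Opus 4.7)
The plan is to sandwich $L_N(f) := \frac{1}{N}\ln \P\{M_N \le f\}$ between values produced by Proposition~\ref{p:incr}, applied to the strictly increasing approximating sequences $g_N := g - \epsilon N^{-1}$ for arbitrary $g > 0$ and $\epsilon > 0$. By Remark~\ref{rem14} such sequences meet the growth condition \eqref{incr} (direct computation gives $g_b - g_{7b/8} = \epsilon/(7b)$, so $r_m = O(m\,2^{-m})$ is summable), so Proposition~\ref{p:incr} delivers the limit $\psi(g,\epsilon) := \lim_N L_N(g - \epsilon N^{-1}) \in (-\infty, 0]$. Set $\underline L(g) := \liminf_N L_N(g)$ and $\bar L(g) := \limsup_N L_N(g)$; both are non-decreasing in $g$.

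From the monotonicity of $L_N$ in its argument I extract two comparisons. First, $g_N \le g$ yields $\psi(g,\epsilon) \le \underline L(g)$; applied with $g = f$ this already gives the finiteness $\underline L(f) \ge \psi(f,\epsilon) > -\infty$. Second, for any fixed $g > f$ one has $g - \epsilon N^{-1} > f$ whenever $N > \epsilon/(g-f)$, so $L_N(g - \epsilon N^{-1}) \ge L_N(f)$ eventually, and passing to $\limsup$ gives $\psi(g,\epsilon) \ge \bar L(f)$. Chaining, $\bar L(f) \le \psi(g,\epsilon) \le \underline L(g)$ for every $g > f$, $\epsilon > 0$, and taking the monotone limit $g \searrow f$ produces
\[
   \bar L(f) \;\le\; \underline L(f^+) := \lim_{g \searrow f} \underline L(g).
\]

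The remaining gap is the right-continuity $\underline L(f^+) = \underline L(f)$, which I expect to be the main technical obstacle: \textit{a priori} the monotone function $\underline L$ may jump at $f$, and approximations only from below cannot detect such a jump. I would close it via a simple Gaussian scaling inequality. Set $\lambda := 1 + \epsilon/f > 1$ and write $p_X$ for the Gaussian density of $X := (S_1, \ldots, S_N)$; the relation $p_X(\lambda y) \le p_X(y)$ holds pointwise (the exponent becomes $\lambda^2$ times more negative), so the change of variable $x = \lambda y$ in the defining integral gives
\[
   \P\{M_N \le f + \epsilon\}
   \;=\; \lambda^N \int_{[-f,f]^N} p_X(\lambda y)\, \dd y
   \;\le\; \lambda^N\, \P\{M_N \le f\},
\]
so $L_N(f + \epsilon) \le L_N(f) + \ln(1 + \epsilon/f)$. (In the degenerate case $\det K = 0$ one runs the same argument on the support subspace of $\mu$, or first perturbs by an independent small Gaussian and passes to the limit.) Passing to $\liminf_N$ and letting $\epsilon \to 0^+$ yields $\underline L(f^+) \le \underline L(f)$, which together with the trivial $\underline L(f^+) \ge \underline L(f)$ gives $\underline L(f^+) = \underline L(f)$.

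Assembling the pieces, $\bar L(f) \le \underline L(f^+) = \underline L(f) \le \bar L(f)$, so all three coincide and the limit $\functionc(f) \in (-\infty, 0]$ exists, as claimed.
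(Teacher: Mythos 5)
Your proof is correct and takes a genuinely different route from the paper's, although both share the same backbone: apply Proposition~\ref{p:incr} to strictly increasing sequences of the form ``constant minus $c/N$'' to get convergence, then transfer the conclusion to the constant level $f$. The paper makes the transfer by invoking Borell's log-concavity theorem --- $r\mapsto\ln\P\{M_N\le r\}$ is concave --- to compare the increment $L_N(f)-L_N(f-1/N)$ with $\tfrac1C\bigl(L_N(f)-L_N(f-C/N)\bigr)$ and then let $C\to\infty$; this argument never needs to consider levels above $f$. You instead establish a one-sided Lipschitz-type modulus $L_N(f+\epsilon)\le L_N(f)+\ln(1+\epsilon/f)$ by an elementary change of variables $x=\lambda y$, $\lambda=1+\epsilon/f$, in the Gaussian integral (using only that a centered Gaussian density is nonincreasing under outward radial scaling), which gives right-continuity of $\underline L$ and closes the sandwich $\bar L(f)\le\underline L(f^+)=\underline L(f)\le\bar L(f)$. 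Your instinct that the right-continuity step is the real obstacle --- and that approximation from below alone cannot detect a jump --- is exactly right, and the scaling estimate is a clean way to close it without appealing to Borell's theorem; the caveat you raise for the degenerate case $\det K=0$ is the one point that needs a line of care (work on the support subspace, where the Jacobian is $\lambda^{\mathrm{rank}\,K}\le\lambda^N$, or perturb by an independent small noise and pass to the limit). Both routes are sound; the paper's is slightly more compact once Borell is available, yours is more self-contained and makes the continuity input explicit.
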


\begin{proof}
We shall make use of the log-concavity of Gaussian measures. For any
sequence $(f_N)$ let
\[
  \LL((f_N)):=  \lim_{N\to\infty} \frac{\ln \P\{M_{N} \le f_N \}}{N}
\]
if this limit exists. We fix $f>0$ and a sequence $\delta_N=\frac{1}{N}$.
Notice that for any $C>0$ the sequence $f_N^{(C)}:=f-C\delta_N$ satisfies
assumption of Proposition~\ref{p:incr} (see Remark \ref{rem14}). Therefore,
$\LL((f_N^{(C)}))$ is well defined.
Finally, let
\[
\II_f:= \liminf_{N\to\infty} \frac{\ln \P\{M_{N} \le f \}}{N}
; \qquad
\MM_f:= \limsup_{N\to\infty} \frac{\ln \P\{M_{N} \le f \}}{N}\, .
\]
We shall prove that
\[
   \II_f=\MM_f=\LL((f_N^{(1)})).
\]
Since we obviously have
\[
   \LL((f_N^{(1)}))\le \II_f \le \MM_f,
\]
it remains to prove that $\MM_f\le \LL((f_N^{(1)}))$.

By the well known log-concavity
of Gaussian measures (\cite{borell}), for each $N\in\N$ the function
$r\mapsto \ln \P(M_N\le r)$ is concave. In particular, for every $C>1$
we have
\begin{eqnarray*}
   &&\ln \P\{M_{N} \le f \} - \ln \P\{M_{N} \le f -\delta_N \}
\\
   &\le& \frac{1}{C} \left(  \ln \P\{M_{N} \le f \}
         - \ln \P\{M_{N} \le f - C\delta_N \}\right).
\end{eqnarray*}
Dividing by $N$ and taking $\limsup$ over $N$ yields
\[
   \MM_f - \LL((f_N^{(1)}))
   \le \frac{1}{C} \left(  \MM_f - \LL((f_N^{(C)})) \right)
   \le \frac{1}{C} \left(  \MM_f - \LL((f/2-\delta_N)) \right) .
\]
Recall that $0\ge \MM_f\ge \LL((f/2-\delta_N))>-\infty$ by Proposition \ref{p:incr}.
Therefore, letting $C\to\infty$ yields  $\MM_f - \LL((f_N^{(1)}))\le 0$, as required.
\end{proof}

\begin{proof}[ of Theorem~\ref{thm:fconst}]
Let the function $\functionc :(0,\infty)\mapsto (-\infty,0]$ be defined,
according to \eqref{flim}, as
\[
    \functionc(f):= \lim_{N\to\infty} \frac{\ln \P\{M_{N} \le f \}}{N}\ .
\]
As a pointwise limit of concave functions, $\functionc(\cdot)$ is itself
concave, hence it is continuous.

Now take a sequence $(f_N)$ satisfying our Theorem's asumption and denote
$f:=\lim_{N\to\infty} f_N\in (0,\infty)$. It is obvious that for any $\delta\in(0,f)$
\[
 \functionc(f-\delta) \le  \liminf_{N\to\infty} \frac{\ln \P\{M_{N} \le f_N \}}{N}
 \le  \limsup_{N\to\infty} \frac{\ln \P\{M_{N} \le f_N \}}{N}
 \le \functionc(f+\delta).
\]
By letting $\delta\to 0$ and using the continuity of $\functionc(.)$ we obtain
\[
 \functionc(f) \le  \liminf_{N\to\infty} \frac{\ln \P\{M_{N} \le f_N \}}{N}
 \le  \limsup_{N\to\infty} \frac{\ln \P\{M_{N} \le f_N \}}{N}
 \le \functionc(f).
\]
It follows that
\[
   \lim_{N\to\infty} \frac{\ln \P\{M_{N} \le f_N \}}{N} = \functionc(f).
\]

It remains to confirm that this limit is strictly negative assuming that
Kolmogorov criterion holds. In this case
$\sigma^2:=\Var(\xi_1|\xi_0,\xi_{-1},\xi_{-2},\ldots)>0$
(see \cite{brockwelldavis}). We obtain with Anderson's inequality (cf.\ (\ref{eqn:anderson})):
\begin{eqnarray*}
    && \P\{ \max_{1\le  n\le  N} |S_n|\le  f\}
\\
    &=& \E[ \P\{ \max_{1\le  n\le  N} |S_n|\le  f | \xi_{N-1},\xi_{N-2},\ldots\} ]
\\
    &=& \E[ \ind_{\max_{1\le  n\le  N-1} |S_n|\le  f} \,
    \P\{ |S_{N-1}+\xi_N|\le  f | \xi_{N-1},\xi_{N-2},\ldots\} ]
\\
    &\le& \E[ \ind_{\max_{1\le  n\le  N-1} |S_n|\le  f} \,\P\{ \sigma|\NN|\le  f \} ]
\\
     &=& \P \{ \max_{1\le  n\le  N-1} |S_n|\le  f \} \, \P\{ \sigma|\NN|\le  f \}
\\
&\le&\ldots
\\
&\le & \P\{ \sigma|\NN|\le f \}^N,
\end{eqnarray*}
where $\NN$ is a standard normal random variable. This shows that
\[
   \lim_{N\to\infty} \frac{1}{N}\ln \P\{ \max_{1\le n\le N} | S_n|\le  f\}
   \le  \ln \P\{\sigma|\NN|\le  f \} < 0.
\]
\end{proof}


\begin{rem} {\rm
There are various interesting open questions related to the constant 
$\functionc(f)$ from Theorem~\ref{thm:fconst}. If we consider it as a 
function $\functionc : (0,\infty) \to (-\infty,0]$, we may ask: 
1) Is it true that $\lim_{f\to 0} \functionc(f) = -\infty$? 
2) Is it true that $\lim_{f\to \infty} \functionc(f) = 0$?
3) Is it true that for every singular process $\functionc(f)=0$ for all $f>0$?
}
\end{rem}
\medskip

{\bf Acknowledgment.} \
M.\,Lifshits was supported by RFBR grant 16-01-00258.


\end{document}